\documentclass[10pt,leqno]{amsart}
\usepackage{graphicx}
\usepackage{indentfirst,csquotes}
\usepackage{amsmath,amssymb,amsfonts}
\usepackage{amsthm}
\usepackage{mathrsfs}

\usepackage{graphicx}
\usepackage{booktabs}
\usepackage{multirow}
\usepackage[numbers]{natbib}
\usepackage{algorithm}
\usepackage{algorithmicx}
\usepackage{algpseudocode}
\usepackage{listings}
\usepackage{latexsym}

\usepackage{xcolor}

\usepackage{tikz}
\usepackage{pgfplots}
\pgfplotsset{compat=1.18}
\usetikzlibrary{patterns}

\usepackage[title]{appendix}

\usepackage[numbers]{natbib}
\newtheorem{theorem}{Theorem}
\newtheorem{proposition}[theorem]{Proposition}
\newtheorem{lemma}[theorem]{Lemma}
\newtheorem{corollary}[theorem]{Corollary}
\theoremstyle{definition}

\usepackage{amssymb,amsthm,amsmath}
\usepackage{xcolor,paralist,hyperref,titlesec,fancyhdr,etoolbox}

\titleformat{\section}[display]{\normalfont\huge\bfseries\centering}{\centering\chaptertitlename\thechapter}{10pt}{\Large}
\titlespacing*{\section}{0pt}{0ex}{0ex}

\hypersetup{ colorlinks=true, linkcolor=black, filecolor=black, urlcolor=black }

\usepackage{lipsum}

\begin{document}
\title{$[k]$-Roman domination on cylindrical grids $C_m \Box P_n$} 
\author[Brezovnik]{Simon Brezovnik$^{(1)}$}
\date{\today}
\address{$^{(1)}$Institute of Mathematics, Physics and Mechanics, Ljubljana, Slovenia}
\email{simon.brezovnik@fs.uni-lj.si}
\author[Žerovnik]{Janez Žerovnik$^{(2)}$}
\address{$^{(1,2)}$Faculty of Mechanical Engineering, University of Ljubljana, Ljubljana, Slovenia}
\address{$^{(2)}$Rudolfovo -- Science and Technology Centre, Novo Mesto, Slovenia}
\email{janez.zerovnik@fs.uni-lj.si}
\maketitle

\let\thefootnote\relax
\footnotetext{MSC2020: Primary 00A05, Secondary 00A66.} 

\begin{abstract}
Roman domination and its higher-order extensions have attracted considerable attention due to their natural interpretation in terms of defensive resource allocation on networks.
The recently introduced $[k]$-Roman domination framework unifies classical Roman, double, triple, and higher-strength protection schemes by allowing each fortified vertex to provide up to $k$ levels of support.
In this paper, we investigate the $[k]$-Roman domination number $\gamma_{[k]R}(G)$ on cylindrical grids $C_m \Box P_n$.
We relate $[k]$-Roman domination to efficient domination and show that for efficient graphs one has $\gamma_{[k]R}(G)=(k+1)\gamma(G)$; as a consequence, we obtain explicit values for broad families of toroidal grids and determine exactly when the cylindrical graphs $C_m\Box P_n$ admit an efficient dominating set.
Building on these structural insights, we derive several upper bounds for $\gamma_{[k]R}(C_m \Box P_n)$ for small fixed values of $m$, accompanied by explicit labeling patterns that attain these bounds.
All obtained bounds are systematically compared, revealing parameter ranges in which different constructions dominate depending on the value of $k$ and the length of the path.
In addition, we present exact packing numbers for selected cylindrical graphs, which complement the domination results and enable further refinements via local weight reductions.
Our results extend and unify known domination-type parameters on grid-like structures and highlight new regularities that emerge as the reinforcement strength increases. 
\end{abstract} 
\bigskip

\small{\noindent
\textbf{Keywords:} $[k]$-Roman domination; domination; efficient domination;  Cartesian product of graphs; 
cylindrical grids;  packing number
\smallskip

\noindent
\textbf{MSC (2020):} 05C69; 05C76}

\newpage

\textbf{\Large{Introduction}}
\bigskip

Domination in graphs is a fundamental concept in graph theory, introduced to formalize the idea of monitoring or controlling a network through a strategically selected subset of vertices.
A set \(D \subseteq V(G)\) is called a \emph{dominating set} if every vertex outside \(D\) is adjacent to at least one vertex in \(D\). 
Over the past decades, this concept has inspired numerous variants that capture different theoretical and applied scenarios—among including total domination, secure domination, and restricted domination (see, for example, the foundational works in~\cite{Haynes1998a} and \cite{Haynes1998b}).

Among the many extensions of domination parameters, \emph{Roman domination} is distinguished both by its historical motivation and its mathematical structure. 
It was introduced by Cockayne \emph{et al.}~\cite{Cockayne1985}, inspired by a military strategy attributed to the Roman Empire under Emperor Constantine the Great: each province was required to maintain its own legion or to rely on a neighboring province capable of deploying reinforcements if necessary. 
This idea translates naturally into a graph-theoretic model.

Formally, a \emph{Roman dominating function} (RDF) on a graph \(G=(V,E)\) is a labeling \(f : V(G) \to \{0,1,2\}\) such that every vertex labeled \(0\) has at least one neighbor labeled \(2\). 
The \emph{weight} of \(f\) is defined by
\[
w(f) = \sum_{v\in V(G)} f(v),
\]
and the \emph{Roman domination number} \(\gamma_R(G)\) is the minimum weight among all RDFs. 
Vertices labeled \(2\) represent heavily fortified positions, those labeled \(1\) moderately defended ones, while vertices labeled \(0\) remain undefended but supported. 
{The heavily fortified vertices are able to answer attacks to the undefended neighbors by deploying a legion thus reducing the total number of legions needed to defend all vertices.}
The parameter $\gamma_R(G)$ has since become very popular topic in domination theory, and it has been studied across a broad spectrum of graph classes; for comprehensive overviews (including classical and many variant parameters) see the series of surveys on Roman domination~\cite{Chellali2021VarietiesI, Chellali2020VarietiesII, Chellali2024VarietiesIII, Chellali2025VarietiesIV, math11020351}.

Several strengthened versions of Roman domination have been introduced. 
The notion of \emph{double Roman domination}, introduced by Beeler \emph{et al.}~\cite{Beeler2016}, allows labels from the set \(\{0,1,2,3\}\), requiring that  every vertex labeled $0$ has at least
two neighbors labeled $2$ or one neighbor labeled $3$, and every vertex labeled
$1$ has at least one neighbor labeled at least $2$.
Subsequently, the \emph{triple} and \emph{quadruple} Roman domination parameters were defined as natural higher-order generalizations (see~\cite{ABDOLLAHZADEHAHANGAR2021125444,doi:10.1142/S1793830921501305}).

{In~\cite{Beeler2016}, the authors established explicit inequalities relating the double Roman domination number to the classical Roman domination number, proved the tightness of these bounds, and described structural conditions under which optimal double Roman dominating labelings require vertices of weight~3. Anu and Lakshmanan~\cite{AnuAparna2024} studied double Roman domination
in Cartesian product graphs, with special emphasis on products of paths and cycles.
They investigated how the double Roman domination number behaves under the Cartesian product
operation and derived explicit bounds, and in several cases exact values, for families such as
$P_m\Box P_n$, $C_m\Box P_n$, and related cylindrical and grid-like graphs.
Their work provides a systematic treatment of Roman-type domination parameters
in product graphs and serves as an important reference point for further generalizations,
including the $[k]$-Roman domination considered in this paper.

Triple Roman domination was investigated in~\cite{ABDOLLAHZADEHAHANGAR2021125444}, where the authors introduced the parameter, established sharp general bounds, analyzed extremal configurations, and studied the structure of optimal triple Roman dominating labelings, thereby laying the groundwork for higher-strength Roman domination variants.

To unify these developments, Abdollahzadeh Ahangar \emph{et al.}~\cite{ABDOLLAHZADEHAHANGAR2021125444} proposed the general framework of \emph{\([k]\)-Roman domination}. 
A function \(f : V(G) \to \{0,1,2,\dots,k+1\}\) is a \emph{\([k]\)-Roman dominating function} (\([k]\)-RDF) if for every vertex \(v\) with \(f(v)<k\),
\[
f(N[v]) \ge k + |AN(v)|,
\]
where \(AN(v)=\{u\in N(v): f(u)>0\}\) denotes the \emph{active neighborhood} of \(v\). 
The minimum weight of a \([k]\)-RDF is the \emph{\([k]\)-Roman domination number}, denoted \(\gamma_{[k]R}(G)\). 
This framework subsumes all classical variants: for \(k=1\) we obtain the classical Roman domination number, while \(k=2,3,4\) correspond to the double, triple, and quadruple Roman domination numbers, respectively.

A concept of $[k]$-Roman domination has been actively studied in recent years for various graph classes.
In~\cite{Khalili2023}, the authors derived general bounds for the $[k]$-Roman domination number and determined exact values for several specific families of graphs, including paths, cycles, complete graphs, and stars, as well as certain classes of trees, and provided characterizations of graphs attaining small $[k]$-Roman domination numbers.
In~\cite{Haghparast2022}, the focus was placed on the $[k]$-Roman domination subdivision number; the authors investigated how subdividing edges affects the $[k]$-Roman domination number, established bounds for this parameter, and determined exact values for paths, cycles, complete graphs, and stars, thereby illustrating the sensitivity of $[k]$-Roman domination to local structural modifications.
More recently, Valenzuela-Tripodoro et al.~\cite{Valenzuela2024} obtained refined upper and lower bounds for $[k]$-Roman domination, proved NP-completeness results even for restricted graph classes such as star-convex and comb-convex bipartite graphs, and derived exact values for additional families of graphs, further elucidating the structure of optimal $[k]$-Roman dominating labelings.}

We note that the literature contains three similar but mutually independent notions related to ``$k$-th extension of Roman domination''. 
Two of them, both called Roman $k$-domination but based on different definitions, were introduced respectively in~\cite{Kammerling2009ROMANKI} and~\cite{KhoshAhang2024}. 
The concept studied in this paper corresponds instead to the \([k]\)-Roman domination framework of~\cite{ABDOLLAHZADEHAHANGAR2021125444}, which unifies Roman, double, triple and higher-order Roman domination into a single coherent generalization.

In this paper, we investigate the Cartesian product of a path and a cycle \(P_m \Box C_n\),
and derive new general bounds together with constructive characterizations of their \([k]\)-Roman dominating functions. 
These results generalize classical Roman domination on cylindrical grids and reveal new regularities that emerge in higher-level Roman domination frameworks.

{The paper is organized as follows.
Section 2 reviews the necessary terminology and known results pertaining to $[k]$-Roman domination.
In Section 3, we establish a direct connection between efficient domination and $[k]$-Roman domination, characterize efficient cylindrical graphs $C_m\Box P_n$, and derive explicit consequences for the corresponding $[k]$-Roman domination numbers, including exact values for infinite families of graphs and general lower bounds.
Section 4 is devoted to deriving upper bounds for the Cartesian products $C_m \Box P_n$ for small fixed values of $m$, including explicit constructions that attain the proposed bounds; moreover, all obtained bounds are systematically compared, revealing how their relative strength depends on the parameter $k$ and the length of the path.
In addition, Section 5 contains exact results on the packing numbers of selected cylindrical graphs and demonstrates how these results can be used to further refine $[k]$-Roman domination bounds.
Finally, Section~6 concludes the paper with a summary of the results and outlines directions for future research on domination-type invariants in Cartesian products.
}
\bigskip

\textbf{\Large{Preliminaries}}
\bigskip

Let \(G=(V,E)\) be a finite, simple, undirected graph. 
For a fixed integer \(k \ge 1\) we consider labelings with values in \(\{0,1,\ldots,k+1\}\).
Given a labeling \(f:V \to \{0,1,\ldots,k+1\}\) and a vertex \(v\in V\), the \emph{active neighborhood} of \(v\) is defined by
\[
AN_G(v) \;=\; \{u\in N(v): f(u)>0\}. 
\] If the graph is clear from the context, the subscript will be omitted and $\mathrm{AN}(v)$ will be used.

A labeling \(f\) is called a \emph{\([k]\)-Roman dominating function} (\emph{\([k]\)-RDF}) if for every vertex \(v\in V\) with \(f(v)<k\),
\[
f(N[v]) \;\ge\; k + |AN(v)|,
\]
where \(N[v]=N(v)\cup\{v\}\) denotes the closed neighborhood of \(v\).
The \emph{weight} of \(f\) is \(w(f)=\sum_{v\in V} f(v)\), and the \emph{\([k]\)-Roman domination number} of \(G\) is
\[
\gamma_{[k]R}(G)=\min\{w(f) : f \text{ is a \([k]\)-RDF on }G\}.
\]
A $[k]$-Roman dominating function $f$ on $G$ is called a \emph{$\gamma_{[k]R}$-function}
if it attains the minimum weight, that is, if $w(f)=\gamma_{[k]R}(G)$.

It is often convenient to write \(f=(V_0,V_1,\dots,V_{k+1})\), where \(V_i=\{v\in V : f(v)=i\}\).
{
When discussing \([k]\)-RDFs, we often employ the tuple notation \(f=(V_0,\ldots,V_{k+1})\) and \(w(f)=\sum_{v\in V} f(v)\).
}
For \(k=1\) this framework reduces to the classical Roman domination, while general background and recent progress for the \([k]\)-case are provided in~\cite{ABDOLLAHZADEHAHANGAR2021125444,Khalili2023, Haghparast2022}.

Two simple but useful observations will be applied repeatedly in the sequel.  
First, for every \(k\ge 2\) there exists an optimal \([k]\)-RDF with no vertices assigned label \(1\) (see~\cite{Haghparast2022,Valenzuela2024}).  
Second, many standard bounds for Roman-type parameters extend to \(\gamma_{[k]R}(G)\) with linear dependence on \(k\); whenever such bounds are needed, we cite them in the context in which they arise.
\medskip

{A set $S\subseteq V(G)$ is called a \emph{packing set} of a graph $G$ if the closed
neighborhoods of distinct vertices in $S$ are pairwise disjoint, that is,
\[
N[u]\cap N[v]=\varnothing
\quad\text{for all distinct } u,v\in S.
\]
The \emph{packing number} of $G$, denoted by $\rho(G)$, is the maximum cardinality
of a packing set in $G$.}

{Mojdeh et al.~\cite{MojdehPeterinSamadiYero2020} investigated the packing number of different graph products, including the Cartesian
product. In their work, they established general upper and lower bounds for
the packing number of Cartesian, strong, and direct products in terms of the
corresponding parameters of the factor graphs, and analyzed how structural
properties of the factors influence the behavior of packing sets in the
product graph. To our knowledge, no further results specifically addressing
packing sets in Cartesian product graphs are obtained in the literature.}

{
As will be seen later, the concept of $[k]$-Roman domination is related
to the notion of efficient domination introduced by Bange, Barkauskas and
Slater~\cite{BangeBarkauskasSlater}
and surveyed in~\cite{Haynes2023}. A set $D \subseteq V(G)$ is an \emph{efficient dominating set} if for every vertex
$v \in V(G)$ there exists a unique vertex $u \in D$ such that
$v \in N[u]$. A graph is said to be \emph{efficient} if it contains an efficient dominating
set.

}

The Cartesian product \(G \Box H\) has vertex set \(V(G)\times V(H)\); vertices \((g,h)\) and \((g',h')\) are adjacent if either  
(i) \(g=g'\) and \(hh'\in E(H)\), or  
(ii) \(h=h'\) and \(gg'\in E(G)\).

We denote \(n=|V(G)|\), and write \(\delta(G)\) and \(\Delta(G)\) for the minimum and maximum degree of \(G\), respectively.  
We use the standard graph families \(P_n, C_n,\) and \(K_n\) for the path, cycle, and complete graph.  
We write \(P_n\) for the path on vertices \(\{0,1,\dots,n-1\}\), and \(C_m\) for the cycle on \(\{0,1,\dots,m-1\}\).
The Cartesian product \(C_m\Box P_n\), called the \textit{cylindrical graph}, has vertices \((i,j)\) with \(i\in\{0,\dots,m-1\}\) (path coordinate) and \(j\in\{0,\dots,n-1\}\) (cycle coordinate).
We call
\[
F_i=\{(i,j): j\in V(C_m)\}
\]
the \emph{\(i\)-th fibre} (a copy of \(C_m\)).

Background on Cartesian products and their applications to interconnection networks can be found in~\cite{Imrich}.

Finally, a graph $H$ is a \emph{spanning subgraph} of a graph $G$ if
$V(H)=V(G)$ and $E(H)\subseteq E(G)$.
\bigskip

\textbf{\Large{\([k]\)-Roman domination on \(C_m \Box P_n\)}}
\bigskip

In this paper, we study $[k]$-Roman domination on cylindrical grids $C_m\Box P_n$ for small fixed $m$.
Recall, $V(P_n)=\{0,1,\dots,n-1\}$, $V(C_m)=\{0,1,\dots,m-1\}$,
and denote a vertex by $(i,j)$, where $i$ is the path coordinate and $j$ the cycle coordinate.

{We begin this section by establishing a direct connection between efficient
dominating sets and the $[k]$-Roman domination problem.
In particular, we show that for graphs admitting an efficient dominating set,
the $[k]$-Roman domination number is completely determined by the classical
domination number. Recall first the following well-known property of efficient dominating sets.

\begin{lemma}[{\cite[Proposition~9.3]{Haynes2023}}]\label{lema1}
Let $G$ be a graph. If $S$ and $S'$ are two efficient dominating sets of $G$,
then
\[
|S|=|S'|=\gamma(G).
\]
\end{lemma}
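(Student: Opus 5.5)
The plan is to show that any efficient dominating set $S$ has size exactly $\gamma(G)$; the claim $|S|=|S'|$ then follows immediately, since both sizes equal $\gamma(G)$. The argument is a double-counting comparison between $S$ and an arbitrary dominating set $D$. Since $S$ is itself dominating, $\gamma(G)\le |S|$, so the whole task is the reverse inequality $|S|\le |D|$ for every dominating set $D$.

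First I count vertices using the partition induced by $S$. By efficiency, the closed neighborhoods $\{N[u]:u\in S\}$ cover $V(G)$ and are pairwise disjoint, so they partition $V(G)$. This gives
\[
|V(G)|=\sum_{u\in S}\bigl(\deg(u)+1\bigr).
\]

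Next I fix a minimum dominating set $D$ and map each $u\in S$ to some vertex $d(u)\in D\cap N[u]$, which exists because $D$ dominates $u$. I claim this map is injective. Suppose $d(u)=d(u')=x$ for distinct $u,u'\in S$. Then $x\in N[u]\cap N[u']$, contradicting the disjointness of the closed neighborhoods of $S$. Hence $|S|\le |D|=\gamma(G)$. Combined with $\gamma(G)\le|S|$, this gives $|S|=\gamma(G)$, and applying the same reasoning to $S'$ gives $|S'|=\gamma(G)$.

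I do not expect a serious obstacle. The only point requiring care is to invoke the packing property of $S$ (pairwise disjoint closed neighborhoods, which is equivalent to the uniqueness in the definition of efficiency) rather than the dominating property. The vertex count from the first step is not needed for the equality itself.
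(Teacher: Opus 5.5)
Your proof is correct and complete. The map $u\mapsto d(u)\in D\cap N[u]$ is well defined because $D$ dominates $u$, and it is injective because the closed neighborhoods of vertices of $S$ are pairwise disjoint; together these give $\gamma(G)\le|S|\le|D|=\gamma(G)$.

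The paper gives no proof of its own and simply cites Haynes et al. Your argument is the standard one: an efficient dominating set is both a dominating set and a packing, so $\gamma(G)\le|S|\le\rho(G)\le\gamma(G)$. Your injection is exactly the usual proof that $\rho(G)\le\gamma(G)$, and, as you note, the vertex count is not needed.
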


We now apply this lemma to determine the $[k]$-Roman domination number of
efficient graphs.

\begin{proposition}
Let $G$ be an efficient graph.
Then
\[
\gamma_{[k]R}(G)=(k+1)\gamma(G).
\]
\end{proposition}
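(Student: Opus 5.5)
The claim splits into an upper bound and a lower bound.

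\emph{Upper bound.} Let $D$ be an efficient dominating set; by Lemma~\ref{lema1}, $|D|=\gamma(G)$. Put $f(v)=k+1$ for $v\in D$ and $f(v)=0$ otherwise. Every $v\notin D$ has $f(v)=0<k$ and exactly one neighbour in $D$, so $|AN(v)|=1$ and
\[
f(N[v])=k+1=k+|AN(v)|.
\]
Vertices of $D$ have label $k+1\ge k$ and impose no condition. Hence $f$ is a $[k]$-RDF and $\gamma_{[k]R}(G)\le (k+1)\gamma(G)$.

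\emph{Lower bound.} I will use a charging argument over the closed neighbourhoods $N[u]$, $u\in D$, which partition $V(G)$ because $D$ is efficient. Then
\[
w(f)=\sum_{u\in D} f(N[u]),
\]
so it suffices to show $f(N[u])\ge k+1$ for every $[k]$-RDF $f$ and every $u\in D$. Summing gives $w(f)\ge (k+1)|D|=(k+1)\gamma(G)$.

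\emph{Local claim.} Fix $u\in D$ and consider $f(u)$.
\begin{itemize}
\item If $f(u)=k+1$, the claim holds trivially.
\item If $f(u)<k$, the $[k]$-RDF condition at $u$ gives $f(N[u])\ge k+|AN(u)|$. This is at least $k+1$ as soon as $u$ has a neighbour with positive label. If $AN(u)=\emptyset$, then $f(N[u])=f(u)<k$, contradicting $f(N[u])\ge k$, so this subcase cannot occur.
\item If $f(u)=k$, the claim fails only when every neighbour of $u$ is labelled $0$, giving $f(N[u])=k$.
\end{itemize}

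The last case is the main obstacle. A neighbour $x$ of $u$ with $f(x)=0$ has $u$ in its active neighbourhood, but it may also have active neighbours outside $N[u]$, so the deficit cannot be repaired inside $N[u]$. Recall that for $k\ge 2$ the labels used are only $0$ and $k+1$, apart from possible $1$'s. I would therefore handle the case by a global modification rather than a local count. Take a $\gamma_{[k]R}$-function $f$ minimising the number of $D$-vertices with $f(u)=k$ and all neighbours $0$. For such a $u$, each neighbour $x$ satisfies $f(N[x])\ge k+|AN(x)|$. Since $f(x)=0$ and $f(u)=k$, $x$ needs additional active weight at least $|AN(x)|$ from outside $N[u]$. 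I will count this surplus and show it can be moved into $N[u]$ without increasing total weight. If $u$ is isolated, then $f(u)\ge k$ already holds and $u$ is handled separately by a direct check.

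If this transfer cannot be made rigorous, I expect to need an extra hypothesis, or to weaken the statement. Concretely, I would first test whether $f(u)=k$ with all neighbours $0$ can occur in a minimum $[k]$-RDF on a small efficient graph such as $C_4$ or $P_4$. If such an example exists, the lower bound must instead come from a refined count, namely charging each $0$-vertex $x$ half a unit from each of its active neighbours.
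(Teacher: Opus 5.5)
\textbf{There is a genuine gap, and it cannot be closed: the statement is false as stated.} Your upper bound is correct and coincides with the paper's construction. The lower bound fails at exactly the case you flag as the main obstacle: a vertex $u\in D$ with $f(u)=k$ whose neighbours are all labelled $0$. In that case nothing forces $f(N[u])\ge k+1$, and no surplus need exist elsewhere to make up the deficit.

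\textbf{Counterexample.} Take $P_4=v_1v_2v_3v_4$ ($C_4$ is not a useful test case, since it is not efficient). $P_4$ is efficient with $D=\{v_1,v_4\}$, so $\gamma(P_4)=2$. Put $f(v_1)=f(v_3)=0$, $f(v_2)=k+1$, $f(v_4)=k$. This is a $[k]$-RDF:
\begin{itemize}
\item $v_2$ and $v_4$ have labels $\ge k$, so they impose no condition;
\item $f(N[v_1])=k+1=k+|AN(v_1)|$;
\item $f(N[v_3])=2k+1\ge k+2=k+|AN(v_3)|$.
\end{itemize}
Its weight is $2k+1<2(k+1)$ for every $k\ge1$; for $k=1$ this is the familiar $\gamma_R(P_4)=3$. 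Even simpler, $K_1$ is efficient and the label $k$ on its vertex is a $[k]$-RDF, so your ``direct check'' for isolated vertices also fails.

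\textbf{Why your transfer step cannot work.} In the $P_4$ example, $f(N[v_4])=k$ and $f(N[v_1])=k+1$ exactly, so the partition parts carry no surplus to move. The excess in $f(N[v_3])$ comes from $v_2$, whose weight is already fully used by $N[v_1]$.

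\textbf{A wrong recollection.} For $k\ge2$ it is not true that only the labels $0$ and $k+1$ (plus possibly $1$) occur. The known fact is only that label $1$ can be avoided. Intermediate labels such as $k$ do occur, and they are precisely what breaks the claim.

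\textbf{The paper's proof has the same gap.} After the same construction, it merely asserts that since the $(k+1)$-vertices are independent and each $0$-vertex sees exactly one of them, ``no labeling can have smaller total weight''. That is the unproved lower bound.

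\textbf{What can be salvaged.} The equality holds for $\Delta$-regular efficient graphs with $\Delta\ge1$. This covers every use in the paper: $C_m\Box C_n$ and $C_{4t}\Box P_2$. For this you must abandon charging over the partition $\{N[u]:u\in D\}$ and instead double count over all closed neighbourhoods, using $(\Delta+1)w(f)=\sum_{v\in V}f(N[v])$.

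\begin{itemize}
\item Every vertex has $f(N[v])\ge k+1$ except ``bad'' vertices, meaning $f(u)=k$ with all neighbours labelled $0$.
\item Let $x$ be a neighbour of a bad vertex. Then $f(x)=0$, and your own observation shows $|AN(x)|\ge2$.
\item If $x$ is adjacent to $d$ bad vertices, then $f(N[x])\ge k+|AN(x)|\ge k+\max\{2,d\}$. This leaves a surplus of at least $1\ge d/\Delta$.
\item Send $1/\Delta$ from each neighbour to each bad vertex. This gives $\sum_v f(N[v])\ge (k+1)|V|=(k+1)(\Delta+1)\gamma(G)$.
\end{itemize}

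Regularity is essential to the identity $\sum_v f(N[v])=(\Delta+1)w(f)$, and $P_4$ shows the conclusion does not survive without it.
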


\begin{proof}
Let $S$ be an efficient dominating set of $G$ and define a labeling
$f:V(G)\to\{0,k+1\}$ by assigning $k+1$ to vertices of $S$ and $0$ to all remaining
vertices.
Since $S$ is a dominating set, every $0$-vertex has a neighbor labeled $k+1$, and
thus $f$ is a valid $[k]$-Roman labeling with weight $(k+1)|S|$.
Observe that, for this labeling, the vertices labeled $k+1$ are pairwise
nonadjacent and each $0$-vertex is adjacent to exactly one such vertex; hence no
labeling can have smaller total weight.
Moreover, by Lemma~\ref{lema1}, we have $|S|=\gamma(G)$, and therefore
$\gamma_{[k]R}(G)=(k+1)\gamma(G)$.

\end{proof}
%

%
%



%

It is known (see Theorem~2.2 in \cite{ChelvamMutharasu2011}) that for integers $m,n\ge5$
the toroidal graph $C_m\square C_n$ admits an efficient dominating set if and only if
$5\mid m$ and $5\mid n$. Consequently, we obtain the following explicit values for the
$[k]$-Roman domination number on toroidal grids.

\begin{corollary}
Let $G = C_m \square C_n$, where
$m,n \equiv 0 \pmod{5}$.
Then
\[
\gamma_{[k]R}(C_m \square C_n)
= (k+1)\gamma(C_m \square C_n)
= (k+1)\frac{mn}{5}.
\]
\end{corollary}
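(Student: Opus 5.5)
The corollary follows by combining the cited characterization of efficient tori with the preceding Proposition; the only step with real content is computing $\gamma(C_m\square C_n)$ from an efficient dominating set.

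First, since $m,n\equiv 0 \pmod 5$ (and hence $m,n\ge 5$), Theorem~2.2 of \cite{ChelvamMutharasu2011} guarantees that $C_m\square C_n$ admits an efficient dominating set $S$. For completeness I would exhibit one explicitly: take
\[
S=\{(i,j): j\equiv 2i \pmod 5\}.
\]
The closed neighbourhood of $(i,j)$ consists of $(i,j)$, $(i\pm1,j)$ and $(i,j\pm1)$. The values $j-2i \pmod 5$ at these five vertices are $0$, $\mp 2$ and $\pm1$, which are all distinct. So every vertex has exactly one vertex of $S$ in its closed neighbourhood. This calculation is consistent modulo $m$ and $n$ precisely because $5\mid m$ and $5\mid n$; that is where the wrap-around of the torus is handled.

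Second, I compute $|S|$. Since $G$ is $4$-regular and $S$ is efficient, the closed neighbourhoods $N[s]$ for $s\in S$ partition $V(G)$, and each has size $5$. Hence $5|S|=mn$, so $|S|=mn/5$. By Lemma~\ref{lema1}, $\gamma(C_m\square C_n)=|S|=mn/5$.

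Finally, $G$ is efficient, so the Proposition gives $\gamma_{[k]R}(C_m\square C_n)=(k+1)\gamma(C_m\square C_n)=(k+1)\frac{mn}{5}$. I do not expect any real obstacle.
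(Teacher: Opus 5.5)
Your argument follows the paper's route: efficiency of the torus via Chelvam--Mutharasu, then the Proposition for efficient graphs. Your set $S=\{(i,j): j\equiv 2i \pmod 5\}$ is exactly the pattern of the paper's displayed matrix. Your distinctness check of $j-2i$ on $N[(i,j)]$ and the count $5|S|=mn$ are correct, and they make explicit the upper bound $(k+1)mn/5$ and the value $\gamma=mn/5$, which the paper leaves implicit.

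The final step does not hold up, and the paper has the same problem. Efficiency alone does not give $\gamma_{[k]R}(G)\ge (k+1)\gamma(G)$, so the Proposition is false as stated. Take $P_4=v_1v_2v_3v_4$, which is efficient with $\gamma(P_4)=2$. The labeling $(0,k+1,0,k)$ is a $[k]$-RDF of weight $2k+1<2(k+1)$: $f(N[v_1])=k+1=k+|AN(v_1)|$, $f(N[v_3])=2k+1\ge k+2$, and $v_2,v_4$ carry labels $\ge k$.

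What saves the torus is $4$-regularity, so you should prove the lower bound directly. This is Proposition~\ref{prop:LBregular} with $\Delta=4$, but the paper's argument there is only a sketch. Here is one way to do it.
\begin{itemize}
\item For any $[k]$-RDF $f$ on $C_m\square C_n$, regularity gives $\sum_v f(N[v])=5\,w(f)$.
\item Let $T=\{v: f(v)=k,\ f\equiv 0 \text{ on } N(v)\}$. Every $v\notin T$ satisfies $f(N[v])\ge k+1$, while each $v\in T$ has $f(N[v])=k$. For $f(v)\ge k$ this is immediate. For $f(v)<k$, note that $AN(v)=\emptyset$ would force $f(N[v])=f(v)<k$, so $f(N[v])\ge k+|AN(v)|\ge k+1$.
\item A neighbour $u$ of a vertex in $T$ has $f(u)=0$. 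It cannot have that vertex as its only active neighbour, since then $f(N[u])=k<k+1$. Writing $t(u)=|N(u)\cap T|\le 4$, its excess satisfies $f(N[u])-(k+1)\ge |AN(u)|-1\ge \max\{1,t(u)-1\}\ge t(u)/4$.
\item Since $\sum_u t(u)=4|T|$, the total excess covers the total deficit $|T|$. Hence $\sum_v f(N[v])\ge (k+1)mn$, that is, $5\,w(f)\ge (k+1)mn$.
\end{itemize}
With this argument in place of the appeal to the Proposition, your proof is complete.
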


In the following matrix, we illustrate a $\gamma_{[k]R}$-function for this class of graphs. The pattern is extended periodically over the whole graph.

\begin{center}
$\displaystyle
\begin{pmatrix}
k+1 & 0   & 0   & 0   & 0 \\
0   & 0   & k+1 & 0   & 0 \\
0   & 0   & 0   & 0   & k+1 \\
0   & k+1 & 0   & 0   & 0 \\
0   & 0   & 0   & k+1 & 0
\end{pmatrix}
$
\end{center}

In contrast to the toroidal case, for which efficient domination is completely
characterized, to our knowledge no characterization is currently known for
cylindrical graphs $C_m \square P_n$. In the sequel, we determine exactly for which pairs $(m,n)$ the graph
$C_m\square P_n$ admits an efficient dominating set.

\begin{theorem}
Let $m\ge 3$ and $n\ge 1$ be integers.
The cylindrical graph $C_m\square P_n$ admits an efficient dominating set if and
only if either $n=1$ and $m\equiv 0 \pmod 3$, or $n=2$ and $m\equiv 0 \pmod 4$.
\end{theorem}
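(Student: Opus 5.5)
We handle sufficiency by explicit constructions and necessity by a local analysis of the boundary fibre $F_0$ that propagates into the next fibres. Throughout, $D$ denotes an efficient dominating set and the cycle coordinate is read modulo $m$.

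\emph{Sufficiency.} For $n=1$ the graph is $C_m$. Its closed neighbourhoods have size $3$, and $\{(0,j): j\equiv 0 \pmod 3\}$ is efficient when $3\mid m$. For $n=2$ take $D=\{(0,j): j\equiv 0\pmod 4\}\cup\{(1,j): j\equiv 2 \pmod 4\}$. A direct check shows that every vertex of the prism $C_m\Box P_2$ lies in exactly one closed neighbourhood $N[d]$, $d\in D$, provided $4\mid m$. For $n=1$ the converse is the classical fact that $C_m$ is efficient only if $3\mid m$, since $3|D|=m$.

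\emph{Necessity for $n\ge 2$: structure of $F_0$.} Each vertex $(0,j)$ is dominated by exactly one of three vertices: itself, a fibre neighbour $(0,j\pm1)$, or $(1,j)$. Hence $F_0$ splits cyclically into two kinds of blocks. A \emph{triple} is $\{(0,c-1),(0,c),(0,c+1)\}$ with $(0,c)\in D$. A \emph{singleton} is $\{(0,j)\}$ with $(1,j)\in D$.

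We first show that two singletons are never consecutive. If they were, $(1,j)$ and $(1,j+1)$ would both be in $D$ and adjacent, which is impossible.

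We next show that two triples are never consecutive. Suppose triples centred at $(0,c)$ and $(0,c+3)$ are adjacent. The vertices $(1,c+1)$ and $(1,c+2)$ are not dominated from $F_0$. They also cannot be dominated by $(1,c)$, $(1,c+3)$, $(1,c+1)$ or $(1,c+2)$, because each of these is adjacent to an already dominated vertex of $F_0$, and putting it in $D$ would double-dominate that vertex. So for $n=2$ they are undominated, a contradiction. For $n\ge3$ they must be dominated by $(2,c+1)$ and $(2,c+2)$ respectively. These two vertices are adjacent, so both cannot lie in $D$, a contradiction.

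Consequently $F_0$ alternates triple, singleton, triple, singleton. This forces period $4$, so $m\equiv 0\pmod 4$, and up to rotation $D\cap(F_0\cup F_1)=\{(0,c),(1,c+2): c\equiv 0\pmod 4\}$. This settles $n=2$.

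\emph{The case $n\ge 3$: contradiction.} With this forced pattern, $F_0$ and $F_1$ are already efficiently dominated. In $F_2$, only the vertices $(2,c+2)$ are dominated, by $(1,c+2)$. No vertex of $F_2$ may lie in $D$, because each is adjacent to an already dominated vertex of $F_1$. Hence $(2,c)$, $(2,c+1)$ and $(2,c+3)$ must be dominated from $F_3$.

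If $n=3$, no fibre $F_3$ exists, so these vertices are undominated, a contradiction. If $n\ge 4$, the vertices $(3,c)$ and $(3,c+1)$ must both belong to $D$. They are adjacent, so $(3,c)$ lies in both $N[(3,c)]$ and $N[(3,c+1)]$, contradicting efficiency.

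The main obstacle is making the block decomposition of $F_0$ airtight. In particular, we must verify that every vertex excluded from $D$ above is excluded for the stated reason, namely that it is adjacent to an already dominated vertex. We must also handle the cyclic wrap-around when $m$ is small, such as $m=3$, where the block sequence must close up consistently. The alternation argument shows directly that no consistent closing is possible unless $4\mid m$.
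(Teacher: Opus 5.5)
Your proof is correct, and it takes a different route from the paper's.

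\textbf{How the paper argues.} The paper treats $m=3$ separately. For $m\ge 4$, $n>2$ it runs a figure-based forcing argument, starting from a vertex of $D$ rotated to position $0$ of some layer. At each forced choice, either two adjacent vertices of a layer can only be dominated from the next layer, which forces two adjacent vertices into $D$, or the local configuration repeats. It then asserts that this repetition ``never reaches a terminal configuration compatible with the boundary layers.''

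\textbf{How you argue.} You anchor everything at the boundary fibre $F_0$. The triple/singleton decomposition, together with your two exclusion rules, forces strict alternation. That yields $4\mid m$ and all of $D\cap(F_0\cup F_1)$ at once. The contradiction then appears explicitly in $F_2$ (for $n=3$) or in $F_3$ (for $n\ge 4$). Both proofs rest on the same observation: two adjacent vertices that can only be dominated from the next fibre force two adjacent vertices into $D$.

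\textbf{What your version gains.} First, it proves that $4\mid m$ is necessary when $n=2$. The paper only exhibits the set for $m=4t$; the $m\ge 4$ argument assumes $n>2$, and counting alone ($4$ divides $2m$) gives only $2\mid m$. Second, it handles $m=3$ inside the general argument. Third, it replaces the pictorial ``forced continuation'' with a short, checkable case analysis. The paper's presentation offers mainly visual intuition in return.

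\textbf{Two small remarks.} The wrap-around you flag as an obstacle is already handled. For $m=3$, the consecutive-triples argument applies verbatim with $c+3\equiv c$, and three singletons are excluded by the singleton rule. To make the ``up to rotation'' identification of $D\cap F_1$ airtight, note that any $(1,j)\in D$ dominates $(0,j)$. Hence $D\cap F_1$ is exactly the set of positions $(1,j)$ for which $(0,j)$ is a singleton.
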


\begin{proof}
Assume that $n=1$. Then $C_m\square P_1\cong C_m$. It is well known that the cycle
$C_m$ admits an efficient dominating set if and only if $m\equiv 0\pmod 3$.
Indeed, choosing every third vertex along the cycle yields a set whose closed
neighborhoods are pairwise disjoint and cover all vertices of $C_m$.

Next assume that $n=2$ and $m\equiv 0\pmod 4$, say $m=4t$.
Define
\[
S=\{(i,0): i\equiv 0 \pmod 4\}\ \cup\ \{(i,1): i\equiv 2 \pmod 4\}.
\]
That is, we select every fourth vertex in each layer, with the selections in the
two layers shifted by two positions along the cycle.

By construction, no two vertices of $S$ are adjacent.
Moreover, every vertex of $C_{4t}\square P_2$ belongs to the closed neighborhood
of exactly one vertex of $S$.
Hence, $S$ is an efficient dominating set of $C_{4t}\square P_2$.

Suppose next, that $m=3$. In such case a single vertex belonging to an efficient dominating set dominates an
entire layer of $C_3$, and efficient domination then forces the neighboring layer to contain no vertices of the dominating set.
As a consequence, two adjacent vertices in that layer need to be dominated by two vertices from the next layer, which is a contradiction.

Assume now that $m\ge 4$ and $n>2$, and let $S$ be an efficient dominating set of $C_m\square P_n$. Moreover, since $C_m\square P_n$ is vertex-transitive along the cycle, we may
rotate the cylindrical graph without loss of generality. Therefore, there exists a layer in which the first vertex belongs to the
efficient dominating set.

Consider the local configuration around this vertex.
As shown in Figure~\ref{proof}a), efficiency forces that $S$ must contain one of
the vertices indicated by an arrow.

If the lower indicated vertex is chosen, then the situation in
Figure~\ref{proof}b) arises: there exist two adjacent vertices $u$ and $v$ in the
same layer which are not dominated by any vertex in the current layer.
Since $S$ is independent and domination is efficient, both $u$ and $v$ would
have to be dominated from the next layer.
However, this is impossible because two adjacent vertices in a layer cannot be
dominated from the next layer without creating conflict with independence.
This yields a contradiction.

Hence one of the two side vertices in Figure~\ref{proof}a) must be chosen.
In this case the continuation of an efficient dominating set is forced (there is
a unique way to avoid overlaps and uncovered vertices), and we are led to one of
the two configurations depicted in Figures~\ref{proof}c) and~\ref{proof}d).
At the vertex $x$, there are only two possible choices: either we place
a vertex of $S$ below $x$ (Figure~\ref{proof}c) or to the right of $x$
(Figure~\ref{proof}d)).

If we choose the vertex below $x$, then Figure~\ref{proof}c) shows that we again
create two adjacent vertices in the same layer that are not dominated within
that layer and thus would have to be dominated from the next layer, which is
impossible for the same reason as above.
Therefore, the configuration in Figure~\ref{proof}c) cannot occur.

Consequently, the only remaining possibility is the configuration in
Figure~\ref{proof}d).
However, from Figure~\ref{proof}d) we see that the forced continuation produces,
after a fixed number of steps, the same local situation again.
Thus, the construction never reaches a terminal configuration compatible with the
boundary layers of $P_n$, and we cannot obtain an efficient dominating set.
This contradiction shows that for $m\ge 4$ and $n>2$ the cylindrical graph
$C_m\square P_n$ in that case does not admit an efficient dominating set. Consequently, the only cylindrical graphs known to admit efficient dominating
sets are the trivial case $C_m\square P_1$ and the family $C_{4t}\square P_2$, which ends the proof.
\end{proof}

\begin{figure}[!t]
\centering
\includegraphics[width=\linewidth]{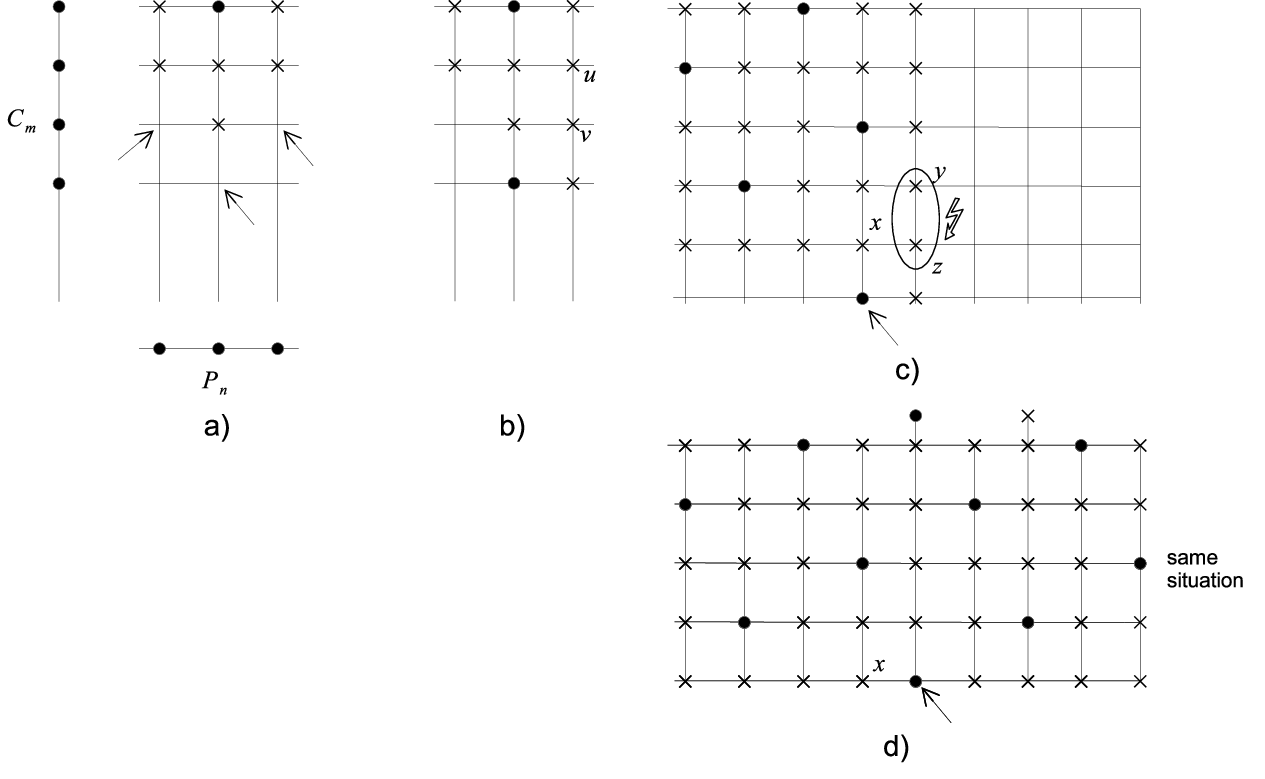}
\caption{Forced local configurations arising in an attempt to construct an efficient dominating set in $C_m\square P_n$ for $m\ge 4$ and $n>2$.}
\label{proof}
\end{figure}

\begin{corollary}
Let $t\ge 1$ and $C_{4t}\square P_2$. Then
\[
\gamma_{[k]R}(C_{4t}\square P_2)=(k+1)\gamma(C_{4t}\square P_2)=2(k+1)t.
\]
\end{corollary}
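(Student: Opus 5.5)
The corollary follows by combining the preceding theorem with the proposition relating $[k]$-Roman domination to efficient domination; the only extra ingredient is the exact value $\gamma(C_{4t}\square P_2)=2t$. First, by the theorem just proved, $C_{4t}\square P_2$ admits an efficient dominating set. The explicit one is
\[
S=\{(i,0): i\equiv 0 \pmod 4\}\cup\{(i,1): i\equiv 2 \pmod 4\},
\]
with $i$ running over the cycle coordinate. So $C_{4t}\square P_2$ is an efficient graph, and the proposition gives $\gamma_{[k]R}(C_{4t}\square P_2)=(k+1)\gamma(C_{4t}\square P_2)$.

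It remains to compute $\gamma(C_{4t}\square P_2)$. By Lemma~\ref{lema1}, every efficient dominating set has cardinality $\gamma(G)$, so it suffices to count $|S|$. Each of the two layers contributes exactly one vertex per block of four consecutive cycle positions, i.e.\ $t$ vertices per layer. Hence $|S|=2t$, and therefore $(k+1)\gamma=2(k+1)t$.

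As an independent sanity check, one can use a counting argument. The graph is $3$-regular on $8t$ vertices, so every closed neighborhood has size $4$. Any dominating set therefore has size at least $8t/4=2t$, with equality exactly when the neighborhoods partition $V$, which $S$ achieves.

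There is no real obstacle here. The one point I would verify carefully is the lower bound $\gamma_{[k]R}\ge (k+1)\gamma$ inside the proposition, since its proof there is only sketched. If one wants the corollary to stand on firmer ground, the standard argument is as follows. Since $m=4t\ge 4$ and $k\ge 1$, we have $k+1\ge 2$. In a $[k]$-RDF $f$, each vertex $v$ with $f(v)<k$ satisfies $f(N[v])\ge k+|AN(v)|\ge k+1$. Each vertex with $f(v)\ge k$ trivially has $f(N[v])\ge k$. The more robust route is a discharging argument: summing $f(N[v])$ over all $v$ counts each label $4$ times in this $3$-regular graph, so
\[
4w(f)=\sum_v f(N[v]).
\]
The aim is to show this sum is at least $(k+1)\cdot 8t$, giving $w(f)\ge 2(k+1)t$. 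A vertex with $f(v)\in\{k,k+1\}$ contributes at least $k$, and if $f(v)=k$ with all neighbors $0$, the deficit must be compensated elsewhere; I expect this accounting to be the only delicate step. Since the proposition is already established in the paper, however, I would simply invoke it.
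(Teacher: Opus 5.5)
Your upper bound, the check that $S$ is efficient, and $\gamma(C_{4t}\square P_2)=2t$ are all fine, and your route (efficiency theorem plus the efficient-graph Proposition) is exactly the paper's. The gap is the lower bound $\gamma_{[k]R}(C_{4t}\square P_2)\ge 2(k+1)t$. You rightly flagged that the Proposition only sketches $\gamma_{[k]R}\ge (k+1)\gamma$, but then you fell back on invoking it, and that inequality is \emph{false} for efficient graphs in general. The path $P_4=v_1v_2v_3v_4$ is efficient (take $\{v_1,v_4\}$) with $\gamma(P_4)=2$. Yet $f=(0,k+1,0,k)$ is a $[k]$-RDF of weight $2k+1<2(k+1)$. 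The culprit is exactly the deficit you identified: $v_4$ has label $k$ and only $0$-neighbours, so it needs no support, and $f(N[v_4])=k$. So the corollary cannot rest on the Proposition. What saves it is that $C_{4t}\square P_2$ is cubic, and you need to actually carry out the accounting you deferred.

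The accounting goes through as follows. Call $v$ \emph{bad} if $f(v)=k$ and every neighbour of $v$ has label $0$; then $f(N[v])=k$. Every other vertex has $f(N[v])\ge k+1$. This is clear if $f(v)\ge k$. If $f(v)<k$, then $AN(v)\neq\varnothing$ (otherwise $f(N[v])=f(v)<k$), so $f(N[v])\ge k+|AN(v)|\ge k+1$.

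Now let $u$ be a neighbour of a bad vertex, so $f(u)=0$. Suppose $u$ has $d\ge 1$ bad neighbours and $b$ other active neighbours of total label $s$. Its condition reads $dk+s\ge k+d+b$. When $d=1$ or $k=1$ this is $s\ge b+1$, which forces $b\ge 1$ and $s\ge 2$, hence $f(N[u])\ge k+2$. When $d,k\ge 2$ we get $f(N[u])\ge 2k\ge k+2$ directly.

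Let every such $u$ send $1/3$ to each of its at most three bad neighbours. Then $u$ keeps at least $k+1$, and each bad vertex receives exactly $1$ from its three $0$-labelled neighbours. Senders (label $0$) are never receivers (label $k\ge 1$). The transfer leaves $\sum_v f(N[v])=4w(f)$ unchanged, so $4w(f)\ge 8t(k+1)$, i.e.\ $w(f)\ge 2(k+1)t$. This closes the gap. The same argument with $1/\Delta$ in place of $1/3$ proves Proposition~\ref{prop:LBregular} for every $\Delta$-regular graph with $\Delta\ge 1$.
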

}

A $\gamma_{[k]}$-function of $C_{4t}\square P_2$ is given by the following matrix.
\begin{center}
$\displaystyle
\begin{pmatrix}
k+1 & 0 \\
0   & 0 \\
0   & k+1 \\
0   & 0 \\
\vdots & \vdots
\end{pmatrix}
$
\end{center}
\bigskip

\textbf{Lower bound for $\gamma_{[k]R}(C_m\Box P_n)$} \bigskip

{To assess the sharpness of the bounds,
we now establish a general lower bound for the $[k]$-Roman domination number
of regular graphs.
Since the bound involves only the order and degree of the graph,
it applies naturally to Cartesian products such as toroidal or cylindrical graphs.}

\begin{proposition}\label{prop:LBregular}
Let $G$ be a $\Delta$-regular graph.
Then
\[
\gamma_{[k]R}(G)\;\ge\; \frac{(k+1)\,|V(G)|}{\Delta+1}.
\]
\end{proposition}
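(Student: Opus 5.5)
The plan is a double-counting argument over closed neighbourhoods, plus a discharging step to handle one exceptional type of vertex. Let $f=(V_0,\dots,V_{k+1})$ be a $\gamma_{[k]R}$-function on $G$, and write $n=|V(G)|$. Since $G$ is $\Delta$-regular, every vertex lies in exactly $\Delta+1$ closed neighbourhoods, so
\[
\sum_{v\in V(G)} f(N[v]) \;=\; (\Delta+1)\,w(f).
\]
It therefore suffices to show that the closed-neighbourhood sums are at least $k+1$ on average. I will assume $\Delta\ge 1$. For $\Delta=0$ the claimed bound fails as stated: on $K_1$ the labeling $f(v)=k$ is a valid $[k]$-RDF, so the weight can be $k<k+1$. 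I would record this restriction in the statement or the proof.

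\emph{Step 1: a local bound for most vertices.} Take $v$ with $f(v)<k$. If $AN(v)=\varnothing$, then $f(N[v])=f(v)<k$, which violates the condition $f(N[v])\ge k$. Hence $|AN(v)|\ge 1$, and so
\[
f(N[v])\ge k+|AN(v)|\ge k+1.
\]
If $f(v)=k+1$, then trivially $f(N[v])\ge k+1$. If $f(v)=k$ and $v$ has an active neighbour, again $f(N[v])\ge k+1$.

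The only exceptional vertices are those in
\[
B=\{v : f(v)=k,\ f(u)=0 \text{ for all } u\in N(v)\}.
\]
For these, $f(N[v])=k$, a deficit of exactly $1$.

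\emph{Step 2: discharging to cover the deficit.} This is the main obstacle. Give each vertex the charge $c(v)=f(N[v])$. Every $0$-vertex $u$ then sends $1/\Delta$ to each neighbour in $B$. A vertex of $B$ has all $\Delta$ neighbours labelled $0$, so it receives exactly $1$ and ends with charge at least $k+1$.

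Now fix a $0$-vertex $u$ with $D\ge 1$ neighbours in $B$, and set $a=|AN(u)|$, so $a\ge D$. We need the excess of $u$ to cover what it sends, i.e.
\[
f(N[u])-(k+1)\ \ge\ \frac{D}{\Delta}.
\]
First, $a\ge 2$. If $a=1$, the unique active neighbour of $u$ would be a $B$-vertex of label $k$, giving $f(N[u])=k<k+1$ and contradicting the $[k]$-RDF condition at $u$. Hence the excess is at least $a-1\ge \max(1,\,D-1)$.

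If $D=1$, the excess is at least $1\ge 1/\Delta$. If $D\ge 2$, then $\Delta\ge D\ge 2$, and $\Delta(D-1)\ge D$ gives $D-1\ge D/\Delta$. Vertices that send nothing keep charge at least $k+1$ by Step 1.

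\emph{Step 3: conclusion.} After discharging, every vertex has charge at least $k+1$, and the total charge is unchanged. Therefore
\[
(\Delta+1)\,w(f)\ \ge\ (k+1)\,n,
\]
which is the required bound. The delicate point is the case analysis in Step 2, in particular the observation that a $0$-vertex next to a $B$-vertex must have a second active neighbour.
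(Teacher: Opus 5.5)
Your proof is correct, and it takes a genuinely different and more complete route than the paper. The paper argues only heuristically. It observes that a vertex labelled $k+1$ with all neighbours labelled $0$ spends weight $k+1$ on $\Delta+1$ vertices, and that pairwise disjoint such neighbourhoods give a $[k]$-RDF. This shows the bound is attained by efficient dominating sets, but it never proves that an arbitrary $[k]$-RDF cannot do better.

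Your identity $\sum_v f(N[v])=(\Delta+1)w(f)$ is what turns that picture into an inequality, once combined with the fact that after discharging every vertex carries charge at least $k+1$. Your discharging step handles exactly the configuration the paper's sketch ignores: a vertex labelled $k$ whose neighbours are all $0$, whose local ratio is only $k/(\Delta+1)$. This case is not an artefact. Take $k=2$ on an even cycle with labels alternating $2,0$: this is a valid $[2]$-RDF of weight $n=(k+1)n/(\Delta+1)$ that uses no label $k+1$ at all, so it is extremal.

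Your key observation is correct: a $0$-vertex adjacent to such a vertex must have a second active neighbour, so its excess is at least $\max(1,D-1)\ge D/\Delta$. Your remark that the statement fails for $\Delta=0$ is also correct, since on an edgeless graph $f\equiv k$ is a $[k]$-RDF of weight $kn$. The hypothesis $\Delta\ge 1$ should therefore be added, although it is harmless for the paper's application to $C_m\Box C_n$.
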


{\begin{proof}
Let $G$ be a $\Delta$-regular graph and let $f:V(G)\to\{0,1,\ldots,k+1\}$ be a function.
Since $G$ is regular, each closed neighborhood contains exactly $\Delta+1$ vertices.
In the best possible local configuration around a vertex $v$ with $f(v)=k+1$,
all the vertices of $N(v)$ are labeled $0$, resulting in $f(N[v])=k+1$.
Moreover, if the closed neighborhoods around all vertices carrying nonzero weight
are pairwise disjoint, then $f$ is a $[k]\mathrm{RDF}$ that assigns weight $k+1$
to each group of $\Delta+1$ vertices.
Hence, the proof is complete.
\end{proof}

The lower bound obtained above applies in particular to Cartesian products of cycles.

\begin{corollary}\label{cor:toroidal-exact}
For all integers $m,n\ge 3$,
\[
\gamma_{[k]R}(C_m\Box C_n) \geq \frac{(k+1)\,mn}{5}.
\]
\end{corollary}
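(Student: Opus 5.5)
The plan is to obtain the corollary directly from Proposition~\ref{prop:LBregular}. First I check the hypotheses. For $m,n\ge 3$, the torus $C_m\Box C_n$ is a Cartesian product of two $2$-regular graphs. In a Cartesian product the degree of $(g,h)$ is $\deg_G(g)+\deg_H(h)$, so every vertex has degree $2+2=4$ and $G=C_m\Box C_n$ is $4$-regular. The requirement $m,n\ge 3$ is exactly what makes $C_m$ and $C_n$ simple cycles, so no multi-edges arise and the degree count is valid. Also $|V(G)|=mn$. Substituting $\Delta=4$ and $|V(G)|=mn$ into Proposition~\ref{prop:LBregular} gives $\gamma_{[k]R}(C_m\Box C_n)\ge (k+1)mn/5$.

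Since the proposition's own proof is only sketched, I would also record a self-contained discharging argument that justifies the general bound, and hence this corollary.

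\textbf{Step 1: normalize the labeling.} Take a $\gamma_{[k]R}$-function $f$ on a $\Delta$-regular graph $G$.

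\textbf{Step 2: define the charges.} Each vertex $u$ sends charge $f(u)/(\Delta+1)$ to every vertex of $N[u]$. The total charge distributed is therefore exactly $w(f)$. The charge received by a vertex $v$ is $f(N[v])/(\Delta+1)$.

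\textbf{Step 3: bound the received charge for low labels.} If $f(v)<k$, the defining inequality gives $f(N[v])\ge k+|AN(v)|$. Every vertex of $N(v)$ with positive label lies in $AN(v)$, so $v$ has at least one active neighbour unless $f(N[v])=f(v)\ge k$, which is excluded. Hence $f(N[v])\ge k+1$, and $v$ receives at least $(k+1)/(\Delta+1)$.

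\textbf{Step 4: bound the received charge for high labels.} If $f(v)=k+1$, then $f(N[v])\ge k+1$ trivially.

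\textbf{Step 5: the main obstacle, vertices labelled exactly $k$.} Here I would use a local argument. If some neighbour of $v$ is positive, then $f(N[v])\ge k+1$. Otherwise $v$ is an isolated positive vertex with $f(N[v])=k$, and its neighbours all have label $0<k$. I would then try to show that such a $v$ can be relabelled $k+1$, raising the weight by one, or that its deficit is compensated. A cleaner route is to exploit the neighbours' constraints: each neighbour $x$ of $v$ has $f(N[x])\ge k+|AN(x)|$, so $x$ carries surplus beyond $k+1$. One would redistribute that surplus to $v$. If this fails, I would fall back on simply accepting the weaker statement as the proposition asserts and invoking it.

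\textbf{Step 6: conclude.} Summing the received charges over all vertices gives $w(f)\ge (k+1)|V(G)|/(\Delta+1)$, which specializes with $\Delta=4$ to the corollary.
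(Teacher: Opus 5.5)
Your first paragraph is the paper's argument: $C_m\Box C_n$ is $4$-regular on $mn$ vertices, so Proposition~\ref{prop:LBregular} with $\Delta=4$ gives the bound, and this part is correct. The discharging argument is not needed for the corollary, but since the paper only sketches the proposition, note that your unfinished Step~5 closes at once (relabelling $v$ to $k+1$ would not help, as it raises the weight of the function you are trying to bound from below): the vertices with $f(N[v])<k+1$ are exactly those labelled $k$ whose neighbours are all labelled $0$, call this set $B$, and each is short by exactly $1$; every $x\in N(B)$ is a $0$-vertex adjacent to a vertex of label $k$, so it needs a second active neighbour, giving $|AN(x)|\ge 2$ and $f(N[x])\ge k+2$; counting the $\Delta|B|$ edges from $B$ into $N(B)$ gives $|N(B)|\ge|B|$, hence $\sum_v f(N[v])=(\Delta+1)w(f)\ge (k+1)|V(G)|$.
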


Finally, we show that
the same result can be adapted to toroidal graphs.
In this case, the labeling patterns extend naturally in both directions,
and only minor local adjustments along the boundary are required
to preserve the $[k]$-Roman domination condition.

To transfer this lower bound to cylindrical graphs, we compare the
$[k]$-Roman domination number under edge deletion.
The key observation is that removing edges cannot decrease the minimum
weight of a $[k]$-Roman dominating function.
This is formalized in the following lemma.

\begin{lemma}\label{lem:spanning-subgraph-monotonicity}
Let $H$ be a spanning subgraph of a graph $G$. Then
\[
\gamma_{[k]R}(G)\le \gamma_{[k]R}(H).
\]
\end{lemma}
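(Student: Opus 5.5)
The plan is to show that every $[k]$-RDF of $H$ is already a $[k]$-RDF of $G$. Since $H$ is spanning, $V(H)=V(G)$, so a labeling $f$ of $H$ is also a labeling of $G$ with the same weight $w(f)$. Taking $f$ to be a $\gamma_{[k]R}$-function of $H$ will then give $\gamma_{[k]R}(G)\le w(f)=\gamma_{[k]R}(H)$.

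Fix a vertex $v$ with $f(v)<k$. Because $f$ is a $[k]$-RDF on $H$,
\[
f(N_H[v])\ge k+|AN_H(v)|.
\]
Since $E(H)\subseteq E(G)$, we have $N_H(v)\subseteq N_G(v)$. Let $X=N_G(v)\setminus N_H(v)$ be the set of new neighbours of $v$, and let $X^+=\{u\in X: f(u)>0\}$ be the active ones among them. Then
\[
f(N_G[v])=f(N_H[v])+\sum_{u\in X^+} f(u)
\qquad\text{and}\qquad
|AN_G(v)|=|AN_H(v)|+|X^+|,
\]
because vertices of $X\setminus X^+$ have label $0$.

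The main point, though not a real obstacle, is that each $u\in X^+$ has $f(u)\ge 1$. Hence $\sum_{u\in X^+} f(u)\ge |X^+|$, so the left-hand side of the condition grows by at least as much as the right-hand side. Combining this with the inequality in $H$ gives
\[
f(N_G[v])\ge k+|AN_H(v)|+|X^+|=k+|AN_G(v)|.
\]
Vertices with $f(v)\ge k$ impose no condition in either graph. Therefore $f$ is a $[k]$-RDF on $G$, and the inequality $\gamma_{[k]R}(G)\le \gamma_{[k]R}(H)$ follows.
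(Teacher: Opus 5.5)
Your proof is correct and follows essentially the same route as the paper: you show that every $[k]$-RDF of $H$ is already a $[k]$-RDF of $G$, since each newly added active neighbour raises $f(N[v])$ by at least $1$ while raising $|AN(v)|$ by exactly $1$. Your explicit split $AN_G(v)=AN_H(v)\cup X^+$ makes the bookkeeping a little cleaner than the paper's chain of inequalities, but the argument is the same.
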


\begin{proof}
Let $f$ be a $[k]$-Roman dominating function on $H$.
We show that $f$ is also a $[k]$-Roman dominating function on $G$.

If $f(v)\ge k$, the $[k]$-Roman domination condition in $v$ is already
satisfied in $H$ and cannot be violated by adding edges.

Next, if $f(v) \le k-1$, then every neighbor
$u\in N_G(v)\setminus N_H(v)$ with $f(u)\ge 1$ contributes at least $1$ to the
value of $f(N_G[v])$.
Hence, the total contribution of all newly added nonzero neighbors is at least
their number, and therefore
\begin{equation*}
\begin{aligned}
f\bigl(N_G[v]\bigr)
&= f\bigl(N_H[v]\bigr)
  + f\bigl(N_G[v]\setminus N_H[v]\bigr) \\[0.3em]
&\ge k + \lvert A N_H(v)\rvert
  + \sum_{x \in N_G[v]\setminus N_H[v]} f(x) \\[0.3em]
&\ge k + \lvert A N_H(v)\rvert
  + \lvert A N_G(v) \setminus AN_H(v)\rvert \\[0.3em]
&\ge k + \lvert \mathrm{AN}_G(v)\rvert .
\end{aligned}
\end{equation*}

Thus, every vertex satisfies the $[k]$-Roman domination conditions in $G$, and
$f$ is a $[k]$-Roman dominating function on $G$.

\end{proof}

Since $C_m\square P_n$ is a spanning subgraph of $C_m\square C_n$,
the following lemma allows us to transfer the lower bound of Corollary \ref{cor:toroidal-exact} to the family of cylindrical graphs.

\begin{corollary}\label{cor:cylindrical-lower-bound}
For all integers $m\ge 3$ and $n\ge 1$,
\[
\gamma_{[k]R}(C_m \square P_n) \ge \frac{(k+1)\,mn}{5}.
\]
\end{corollary}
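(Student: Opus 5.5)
The plan is to transfer the toroidal lower bound through edge deletion whenever the torus $C_m\Box C_n$ is defined, and to treat the degenerate path lengths $n=1,2$ separately by a direct appeal to Proposition~\ref{prop:LBregular}.

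\textbf{Case $n\ge 3$.} Here $C_n$ is a genuine cycle, and $C_m\Box P_n$ is obtained from $C_m\Box C_n$ by deleting the $m$ edges joining the fibres $F_0$ and $F_{n-1}$. The two graphs have the same vertex set, so $C_m\Box P_n$ is a spanning subgraph of $C_m\Box C_n$. Lemma~\ref{lem:spanning-subgraph-monotonicity}, applied with $H=C_m\Box P_n$ and $G=C_m\Box C_n$, gives
\[
\gamma_{[k]R}(C_m\Box P_n)\ \ge\ \gamma_{[k]R}(C_m\Box C_n).
\]
Corollary~\ref{cor:toroidal-exact}, which is valid for all $m,n\ge 3$, bounds the right-hand side below by $(k+1)mn/5$. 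This settles the case.

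\textbf{Case $n\in\{1,2\}$.} The torus is not available as a simple graph, but the cylinder is itself regular. For $n=1$ we have $C_m\Box P_1\cong C_m$, which is $2$-regular. Proposition~\ref{prop:LBregular} then gives
\[
\gamma_{[k]R}\ \ge\ \frac{(k+1)m}{3}\ \ge\ \frac{(k+1)m}{5}.
\]
For $n=2$, the graph $C_m\Box P_2$ is $3$-regular on $2m$ vertices. Proposition~\ref{prop:LBregular} gives
\[
\gamma_{[k]R}\ \ge\ \frac{(k+1)\,2m}{4}\ \ge\ \frac{(k+1)\,2m}{5}.
\]
In both subcases the claimed bound follows, in fact with room to spare.

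\textbf{Main obstacle.} The only real subtlety is that the statement allows $n=1,2$, where the inclusion into a torus breaks down. The case split above handles this. Beyond that, the argument inherits whatever rigor Proposition~\ref{prop:LBregular} carries. If a self-contained justification of that proposition were wanted, I would use a discharging argument. Each vertex $v$ with $f(v)\ge 1$ sends its weight, split into $f(v)-1$ plus one unit per neighbor, to its closed neighborhood. One then checks, from the $[k]$-RDF condition $f(N[v])\ge k+|AN(v)|$, that every vertex with $f(v)<k$ receives enough charge. With the shares chosen suitably, this charge should be at least $(k+1)/(\Delta+1)$ per vertex. I expect this to be the step needing the most care, but it is not required here, since the proposition may be assumed.
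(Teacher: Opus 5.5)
Your proposal is correct and follows the paper's route: Lemma~\ref{lem:spanning-subgraph-monotonicity} applied to $C_m\Box P_n\subseteq C_m\Box C_n$, then Corollary~\ref{cor:toroidal-exact}. Your separate handling of $n\in\{1,2\}$ via Proposition~\ref{prop:LBregular}, applied to the $2$-regular $C_m$ and the $3$-regular $C_m\Box P_2$, covers exactly the cases the paper's one-line argument silently skips, since $C_m\Box C_n$ is not a simple graph there.
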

}
\bigskip

\textbf{Upper bounds for $\gamma_{[k]R}(C_3\Box P_n)$}
\bigskip

{We first analyze the case $C_3\Box P_n$, which provides a natural basis case for studying
$[k]$-Roman domination on cylindrical grids.
We begin with a construction that performs particularly well for small values of $k$
and, in the special case $k=2$, improves the currently known upper bound for
double Roman domination.}

\begin{theorem}\label{prop:C3Pn}
For $k\ge 1$ and $n\ge 4$,
\begin{equation}\label{L}
\gamma_{[k]R}(C_3\Box P_n)\le nk+2 . 
\end{equation}
\end{theorem}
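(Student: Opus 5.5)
The plan is to give an explicit labeling. The natural weight budget $nk+2$ suggests one vertex of label $k$ in each of the $n$ fibres (triangles), plus one extra unit at each end of the path. Using the fibre notation $F_i$, I would define
\[
f(i,j)=\begin{cases} k, & j\equiv i \pmod 3,\ 1\le i\le n-2,\\ k+1, & (i,j)=(0,0)\ \text{or}\ (i,j)=(n-1,(n-1)\bmod 3),\\ 0, & \text{otherwise.}\end{cases}
\]
So the $k$-vertices form a "diagonal" winding around the cylinder. Its weight is $(n-2)k+2(k+1)=nk+2$.

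Only vertices with label $0$ need checking, since vertices with labels $k$ or $k+1$ satisfy $f(v)\ge k$.

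\textbf{Interior fibres.} Take $1\le i\le n-2$ and $(i,j)$ with $j\not\equiv i$. Its neighbours are the two other vertices of $F_i$ and the vertices $(i\pm1,j)$.
\begin{itemize}
\item Inside $F_i$ exactly one neighbour is active, namely $(i,i\bmod 3)$.
\item Of the two residues $j\ne i$, one is $i+1$ and the other is $i-1 \pmod 3$. Hence exactly one of $(i+1,j)$, $(i-1,j)$ is the labelled vertex of its fibre.
\end{itemize}
Thus $|AN(v)|=2$ and $f(N[v])\ge 2k$, which is at least $k+2$ precisely when $k\ge2$. If a neighbouring fibre is an end fibre, the relevant neighbour has label $k+1$, which only helps.

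\textbf{End fibres.} Consider $F_0$.
\begin{itemize}
\item The vertex $(0,1)$ sees $(0,0)$ with label $k+1$ and $(1,1)$ with label $k$, so $f(N[v])=2k+1\ge k+2$.
\item The vertex $(0,2)$ has only one active neighbour, $(0,0)$, so it needs $f(N[v])\ge k+1$. This holds exactly because $(0,0)$ was raised to $k+1$.
\end{itemize}
The fibre $F_{n-1}$ is symmetric. Raising these two end vertices is the reason for the additive constant $2$. The hypothesis $n\ge4$ ensures the two end corrections do not interact and every interior fibre has interior neighbours of the stated form. For small $n$ I would verify the shifted residues directly.

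\textbf{Main obstacle: the case $k=1$.} The diagonal pattern fails there, since an interior $0$-vertex gets only $2<3$. Worse, Corollary~\ref{cor:cylindrical-lower-bound} gives $\gamma_{[1]R}(C_3\Box P_n)\ge 6n/5$, which exceeds $n+2$ once $n>10$. So no construction can give the bound for $k=1$ and large $n$. I would therefore prove the theorem for $k\ge2$ via the construction above. For $k=1$ I would only check the finitely many values $4\le n\le10$ by hand, or else flag that the stated range must be restricted to $k\ge2$.
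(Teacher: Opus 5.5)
Your labeling is the paper's own construction. The paper also puts $k$ on $(i,j)$ with $j\equiv i\pmod 3$ in the interior fibres and $k+1$ on $(0,0)$. Its rule for $F_{n-1}$ (the vertex one step past the $k$-vertex of $F_{n-2}$) is exactly your $(n-1,(n-1)\bmod 3)$.

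Your case check for $k\ge 2$ is correct and more careful than the paper's, which only asserts that every $0$-vertex satisfies the condition. Your objection at $k=1$ is valid, and the paper's proof misses it. An interior $0$-vertex has two active neighbours, both labelled $1$, so $f(N[v])=2<3$; for $k=1$ the labeling is not even a Roman dominating function. Your appeal to the lower bound $6n/5$ is legitimate: for $k=1$ it is the classical bound $\gamma_R(G)\ge 2|V(G)|/(\Delta+1)$ with $\Delta=4$.

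The one thing to drop is the fallback of checking $4\le n\le 10$ by hand for $k=1$. It would fail for every such $n$, because the inequality is false for all $n\ge 4$ when $k=1$.

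\textbf{The case $n=4$.} Here $\gamma_R(C_3\Box P_4)=7>6$.
\begin{itemize}
\item A dominating set of size $3$ would have to meet both $F_0$ and $F_3$, and the third vertex cannot finish dominating both $F_1$ and $F_2$. So three $2$'s still force a vertex labelled $1$, giving weight at least $7$.
\item Two vertices labelled $2$ dominate at most $9$ of the $12$ vertices, again giving weight at least $7$.
\item All other numbers of $2$'s give weight at least $8$.
\end{itemize}

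\textbf{General $n$.} Let $s_i$ be the number of $2$'s in $F_i$. A fibre with $s_i=0$ carries weight at least $3-s_{i-1}-s_{i+1}$. A short count over fibres then gives $\gamma_R(C_3\Box P_n)\ge 3n/2$, which exceeds $n+2$ for $n\ge 5$.

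So the correct conclusion is your second option. The theorem holds by exactly this construction for $k\ge 2$ and $n\ge 4$, and the stated range must be restricted to $k\ge 2$.
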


\begin{proof}

Define $f$ as follows. For interior fibres $1\le i\le n-2$ set
\[
f(i,j)=
\begin{cases}
k,& \text{if } j\equiv i \pmod 3,\\
0,& \text{otherwise}.
\end{cases}
\]
Let us now consider the boudary fibres by starting with the first fibre. Define
\[
f(0,0)=k+1,\quad f(0,1)=f(0,2)=0.
\] Lastly, suppose $f(n\!-\!2,j)=k$ for $j \in \{0,1,\ldots, m-1\}$.
On the last fibre $F_{n-1}$ we assign the value $k+1$ to $(n\!-\!1,j+1\!\!\! \pmod 3)$,
and $0$ to the remaining vertices of $F_{n-1}$.

Note that $f$ is a $[k]$-RDF, since the
{sum of weights of $f$  in the neighborhood of }
 each vertex labelled $0$ is at least $k + |AN(v)|$. The weight of $f$ is equal $2(k+1)+(n-2)k=nk+2$. Thus, the claim follows.
\end{proof}

In Theorem~3.3 of~\cite{AnuAparna2024}, it was shown that
\(\gamma_{[2]R}(C_3\Box P_n)\le 2n+3\).
By Theorem~\ref{prop:C3Pn}, this bound can be slightly improved, as stated in the following corollary.

\begin{corollary}
For every integer $n\ge 4$, the double domination number of $C_3\Box P_n$ satisfies
\[
\gamma_{[2]R}(C_3\Box P_n)\le 2n+2.
\]
\end{corollary}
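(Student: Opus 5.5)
The corollary should follow by substituting $k=2$ into Theorem~\ref{prop:C3Pn}. That theorem holds for every $k\ge 1$ and $n\ge 4$, and gives $\gamma_{[2]R}(C_3\Box P_n)\le 2n+2$ directly. The only other thing to settle is that $[2]$-Roman domination is exactly the double Roman domination of~\cite{Beeler2016}, so that the comparison with the bound $2n+3$ of~\cite{AnuAparna2024} is meaningful.

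Because the proof of Theorem~\ref{prop:C3Pn} only asserts the verification, I would also check the labeling explicitly for $k=2$. Only vertices with label $0$ need checking, since labels $2$ and $3$ satisfy $f(v)\ge k$.

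\emph{Interior fibres.} For $1\le i\le n-2$, take $(i,j)$ with $j\not\equiv i \pmod 3$.
\begin{itemize}
\item Since $C_3$ is complete, $(i,j)$ is adjacent to the vertex $(i,i \bmod 3)$, which has label $2$.
\item Its neighbours in the adjacent fibres are $(i-1,j)$ and $(i+1,j)$. Because $j\equiv i\pm1 \pmod 3$, exactly one of them carries the label $2$.
\end{itemize}
So $|AN(v)|=2$ and $f(N[v])=4=k+|AN(v)|$. In the fibres at $i=1$ and $i=n-2$, the outer neighbour may instead carry label $3$ or $0$; this only raises $f(N[v])$ by at least the amount it raises $|AN(v)|$.

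\emph{First fibre.} Here $f(0,0)=3$.
\begin{itemize}
\item $(0,1)$ sees $(0,0)$ and $(1,1)$, which has label $2$. This gives $f(N[v])=5\ge 4$.
\item $(0,2)$ sees only $(0,0)$ as a nonzero neighbour, so $f(N[v])=3\ge 2+1$.
\end{itemize}

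\emph{Last fibre.} This is symmetric: the label $3$ is placed at the vertex of $F_{n-1}$ lying just after the $2$-vertex of $F_{n-2}$.

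The weight is $2\cdot 3+2(n-2)=2n+2$. The only step needing any care is the boundary check, in particular confirming that the cyclic shift on $F_{n-1}$ works for every residue of $n$ modulo $3$. This holds because the argument depends only on the relative positions of the labels, not on $n$ itself.
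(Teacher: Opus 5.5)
Your proposal is correct and is exactly the paper's argument: the corollary follows by setting $k=2$ in Theorem~\ref{prop:C3Pn}, which gives $nk+2=2n+2$. Your explicit check of the labeling is a worthwhile addition. An interior $0$-vertex gets $f(N[v])=2k$ against the requirement $k+2$, so the construction genuinely needs $k\ge 2$. Also, the fibres $F_1$ and $F_{n-2}$ work because each $3$ sits exactly where the periodic pattern would put a $2$, namely at $(0,0)$ and at $(n-1,\,(n-1)\bmod 3)$.
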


{In the following, we present an alternative construction that yields better bounds
for larger values of $k$.

\begin{theorem}\label{C} \label{prop1-C3Cn} For $k\ge 1$ and $n\ge 4$,
\begin{equation}\label{U}
\gamma_{[k]R}(C_3 \Box P_n)
\;\le\;
3 (n-2)\left\lceil\frac{k+4}{5}\right\rceil
\;+\;
6\left\lceil
\frac{k+3-\left\lceil\frac{k+4}{5}\right\rceil}{3}
\right\rceil\le\;
\frac{3nk + 27n + 2k - 2}{5}.
\end{equation}
\end{theorem}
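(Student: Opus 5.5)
The plan is to exhibit an explicit $[k]$-RDF that is constant on fibres. Set
\[
a=\left\lceil\frac{k+4}{5}\right\rceil,\qquad b=\left\lceil\frac{k+3-a}{3}\right\rceil .
\]
Define $f(i,j)=a$ for every vertex of an interior fibre $F_i$, $1\le i\le n-2$. Define $f(i,j)=b$ for every vertex of the two boundary fibres $F_0$ and $F_{n-1}$. The weight is then $3(n-2)a+6b$, which is exactly the first expression in \eqref{U}. Both labels are at most $k+1$, since $a\le k+1$ and $b\le k+1$ for all $k\ge1$. Every label is positive, so every neighbour is active and $|AN(v)|=\deg(v)$. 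Hence the $[k]$-RDF condition reads $f(N[v])\ge k+\deg(v)$, and it need only be checked at vertices whose label is smaller than $k$.

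We verify this condition fibre by fibre.

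\emph{Boundary vertices.} A vertex of $F_0$ or $F_{n-1}$ has degree $3$: two neighbours in its own $C_3$-fibre and one neighbour in the adjacent interior fibre. So $f(N[v])=3b+a$, and $3b+a\ge k+3$ holds by the choice of $b$.

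\emph{Deep interior vertices.} A vertex of $F_i$ with $2\le i\le n-3$ has degree $4$, and all five vertices of $N[v]$ carry label $a$. So $f(N[v])=5a\ge k+4$.

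\emph{Interior vertices next to the boundary.} This is the step needing care. A vertex of $F_1$ (or symmetrically $F_{n-2}$) has one neighbour in a boundary fibre, so $f(N[v])=4a+b$. The hypothesis $n\ge4$ guarantees that $F_1$ and $F_{n-2}$ are interior fibres, each with two fibre neighbours. It therefore suffices to show $b\ge a$, which gives $4a+b\ge5a\ge k+4$.
\begin{itemize}
\item For $k\in\{1,2\}$ we have $a=b=1$ directly.
\item For $k\ge3$ we use $a\le (k+8)/5$, which gives $4a<k+6$. Hence $k+3-a>3(a-1)$, so $(k+3-a)/3>a-1$ and therefore $b\ge a$.
\end{itemize}
I expect this comparison between $a$ and $b$ to be the only delicate point; everything else is a direct count.

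Finally we prove the closed-form upper bound by estimating the two ceilings separately.
\begin{itemize}
\item Using $a\le (k+8)/5$ gives $3(n-2)a\le 3(n-2)(k+8)/5$.
\item Using $b\le (k+5-a)/3$ together with the lower estimate $a\ge (k+4)/5$ gives $6b\le 2(k+5)-2(k+4)/5=(8k+42)/5$.
\end{itemize}
Adding the two estimates yields
\[
3(n-2)a+6b\le \frac{3nk+24n+2k-6}{5}\le\frac{3nk+27n+2k-2}{5},
\]
where the last inequality holds because $24n-6\le 27n-2$. This completes the proof of the second inequality in \eqref{U}.
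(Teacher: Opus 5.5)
Your construction is the same as the paper's: label $a=\lceil (k+4)/5\rceil$ on every interior vertex and $b=\lceil (k+3-a)/3\rceil$ on every boundary vertex, for weight $3(n-2)a+6b$. Your verification is more careful than the paper's, which simply asserts $f(N[v])\ge k+4$ for all interior vertices and never treats $F_1$ and $F_{n-2}$ (where the sum is $4a+b$) separately.

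There is one arithmetic slip. For $k=2$ you have $a=\lceil 6/5\rceil=2$, not $1$, so $b=\lceil 3/3\rceil=1<a$, and your reduction to $b\ge a$ fails in exactly that case. The slip is harmless. For $k=2$ the interior vertices carry label $2=k$ and need no check at all, and in any case $4a+b=9\ge 6$. The boundary vertices give $3b+a=5=k+3$. With this one-line repair the argument is complete.

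Your closed-form estimate is correct and slightly sharper than the paper's. You use $\lceil m/5\rceil\le (m+4)/5$ and $\lceil m/3\rceil\le (m+2)/3$ for integers $m$, which gives $(3nk+24n+2k-6)/5$. The paper applies $\lceil x\rceil\le x+1$ to both ceilings and lands exactly on $(3nk+27n+2k-2)/5$.
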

\begin{proof}

For each interior fibre $1 \le i \le n-2$, assign weight
\[
\left\lceil \frac{k+4}{5} \right\rceil
\]
to each vertex of the fibre.
On each boundary fibre (i.e., $i=0$ and $i=n-1$), distribute equal weight to each of its three vertices, so that each boundary fibre receives total weight
\[
\left\lceil
\frac{k+3-\left\lceil \frac{k+4}{5} \right\rceil}{3}
\right\rceil.
\]

This assignment ensures that every vertex $v$ in an interior fibre satisfies
\[
f(N[v]) \ge k+4,
\]
and every boundary vertex $v$ satisfies
\[
f(N[v]) \ge k+3.
\]

Therefore, $f$ is a $[k]\mathrm{RDF}$.
Its total weight is at most the contribution of the interior fibres,
which equals
\[
3(n-2)\left\lceil \frac{k+4}{5} \right\rceil,
\]
plus the contribution of the two boundary fibres, which equals
\[
6\left\lceil
\frac{k+3-\left\lceil \frac{k+4}{5} \right\rceil}{3}
\right\rceil.
\]
Hence,
\[
w(f)
\le
3(n-2)\left\lceil \frac{k+4}{5} \right\rceil
+
6\left\lceil
\frac{k+3-\left\lceil \frac{k+4}{5} \right\rceil}{3}
\right\rceil.
\]

Using the inequalities $\lceil x \rceil \le x+1$ and
$\lceil x \rceil \ge x$, we obtain
\[
\left\lceil \frac{k+4}{5} \right\rceil \le \frac{k+4}{5}+1
\quad\text{and}\quad
\left\lceil
\frac{k+3-\left\lceil \frac{k+4}{5} \right\rceil}{3}
\right\rceil
\le
\frac{4k+11}{15}+1.
\]
Substituting these bounds yields
\[
w(f)
\le
\frac{3nk + 27n + 2k - 2}{5},
\]
which completes the proof.

\end{proof}

The construction above leaves some slack in the neighborhood sums,
as the obtained values often exceed the minimum required by the
$[k]$-Roman domination condition.
For certain values of $k$, this additive term can be reduced by increasing
the base weights to $\left\lceil\frac{k+5}{5}\right\rceil$ and then decreasing
the weight by one on vertices from a suitably chosen packing set. Therefore, the following lemma will be needed.

\begin{lemma}\label{lem:packing-C3Pn}
For $n\ge 1$, the packing number of $C_3\Box P_n$ is
\[
\rho\!\left(C_3\Box P_n\right)=\left\lceil \frac{n}{2}\right\rceil .
\]
\end{lemma}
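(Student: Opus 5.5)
The plan is to prove matching lower and upper bounds, using only the fibre structure of $C_3\Box P_n$. The key observation is that each fibre $F_i$ is a triangle. Hence for any vertex $(i,j)$, the closed neighborhood $N[(i,j)]$ contains the whole fibre $F_i$ together with $(i-1,j)$ and $(i+1,j)$, whenever these exist.

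For the upper bound, I will exhibit a packing of size $\lceil n/2\rceil$. Take
\[
S=\{(i,0): i \text{ even},\ 0\le i\le n-1\}.
\]
For even $i<i'$ we have $i'-i\ge 2$. The closed neighborhood of $(i,0)$ lies in $F_{i-1}\cup F_i\cup F_{i+1}$ and meets $F_{i\pm1}$ only in the vertex with cycle coordinate $0$. So a common vertex of $N[(i,0)]$ and $N[(i',0)]$ could only occur when $i'=i+2$, and it would have to be $(i+1,0)$. But the two vertices $(i,0)$ and $(i+2,0)$ are at distance $2$, so their closed neighborhoods do intersect. This choice therefore fails, and I must shift coordinates instead. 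Take
\[
S=\{(i,j_i): i \text{ even}\},\qquad j_i\equiv i/2 \pmod 3.
\]
Then consecutive chosen vertices $(i,j)$ and $(i+2,j')$ have $j\ne j'$. Now $N[(i,j)]\cap F_{i+1}=\{(i+1,j)\}$ and $N[(i+2,j')]\cap F_{i+1}=\{(i+1,j')\}$, and these are disjoint. Fibres further apart are trivially disjoint. This gives $\rho\ge\lceil n/2\rceil$.

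For the lower bound, I argue that any packing $S$ contains at most one vertex in any two consecutive fibres $F_i\cup F_{i+1}$. Two vertices in the same fibre are adjacent, so their closed neighborhoods meet. Take $(i,j)\in S$ and $(i+1,j')\in S$. If $j=j'$ they are adjacent. Otherwise $(i,j')$ lies in $N[(i,j)]$, since it is in the same fibre, and also in $N[(i+1,j')]$, since it is a fibre neighbor. Either way the neighborhoods intersect. Partitioning $\{0,\dots,n-1\}$ into $\lceil n/2\rceil$ blocks of at most two consecutive indices then gives $|S|\le\lceil n/2\rceil$.

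No step is a real obstacle. The only care needed is in the construction: choosing the same cycle coordinate fails at distance two, so the cycle coordinate must rotate. The small cases $n=1,2$ follow from the same argument.
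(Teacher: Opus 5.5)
Your proof is correct and follows essentially the paper's argument: at most one packing vertex in any two consecutive fibres, and a construction using one vertex in every second fibre. Your rotation $j_i\equiv i/2 \pmod 3$ makes explicit a detail the paper leaves implicit (same cycle coordinate in fibres $i$ and $i+2$ gives distance $2$). The only slip is cosmetic: your headings ``upper bound'' and ``lower bound'' are swapped, since the construction yields $\rho\ge\lceil n/2\rceil$ and the block-counting yields $\rho\le\lceil n/2\rceil$, as your inequalities themselves correctly state.
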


\begin{proof}
Let $S$ be a packing in $C_3\Box P_n$. In $C_3\Box P_n$, the closed neighborhood
of any vertex intersects at most three consecutive fibres, and it is easy to check that no two
vertices of a packing can lie in adjacent fibres.
On the other hand, choosing one vertex in every second fibre yields a packing
of size $\left\lceil \frac{n}{2}\right\rceil$, so the exact value is attained.
\end{proof}

This leads to the following result.

\begin{theorem}\label{prop2:C3Pn}  For $k\ge 1$ and $n\ge 4$,
\begin{equation}\label{P}
\gamma_{[k]R}(C_3\Box P_n)
\ \le\
3 (n-2) \left\lceil\frac{k+5}{5}\right\rceil
\ +\
6\left\lceil
\frac{k+3-\left\lceil \frac{k+5}{5}\right\rceil}{3}
\right\rceil
\ -\
\left\lceil \frac{n}{2}\right\rceil \le \frac{6kn+4k+55n-20}{10}. 
\end{equation}
\end{theorem}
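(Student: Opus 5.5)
We construct an explicit labeling, following the construction in the proof of Theorem~\ref{C}. Set $a=\lceil (k+5)/5\rceil$ and $b=\lceil (k+3-a)/3\rceil$.

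\textbf{Base labeling.} Every vertex of an interior fibre $F_i$, $1\le i\le n-2$, receives $a$. Every vertex of the two boundary fibres $F_0,F_{n-1}$ receives $b$, so that a boundary vertex $v$ satisfies $f(N[v])\ge 3b+a\ge k+3$. As in Theorem~\ref{C}, an interior vertex $v$ has $N[v]$ consisting of five vertices of interior fibres (or four plus one boundary vertex). Hence $f(N[v])\ge 5a\ge k+5$, which exceeds the requirement $k+4$ by at least one. Since all labels are positive, $|AN(v)|$ equals $4$ for interior vertices and $3$ for boundary vertices.

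\textbf{Reduction via a packing.} Take a maximum packing $S$ of $C_3\Box P_n$ of size $\lceil n/2\rceil$ from Lemma~\ref{lem:packing-C3Pn}, namely one vertex in every second fibre. Decrease the label of each vertex of $S$ by one. Because the closed neighbourhoods of packing vertices are pairwise disjoint, every $N[v]$ contains at most one vertex of $S$, so every neighbourhood sum drops by at most one. We then check the requirement in each case.

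\textbf{Checking the conditions.}
\begin{itemize}
\item Interior vertices keep $f(N[v])\ge k+4$. This uses the slack coming from $a$ being $\lceil (k+5)/5\rceil$ rather than $\lceil (k+4)/5\rceil$.
\item Boundary vertices are the delicate case, since the base labeling has no guaranteed slack there. To handle them, we choose the packing fibres to be $F_1,F_3,\dots$, avoiding $F_0$ and $F_{n-1}$ where possible. For $n$ even this is possible with exactly $n/2$ fibres, and no boundary vertex's closed neighbourhood meets $S$ except through $F_1$ or $F_{n-2}$.
\item If a boundary sum would fall below $k+3$, we rely on $3b+a-1\ge k+3-1$ together with $3b\ge k+3-a$ rounded up. Where this is not enough, we shift the packing so that its vertices in $F_1$ and $F_{n-2}$ are positioned to give the needed slack.
\item We must also make sure no label becomes zero while $|AN(v)|$ is still counted. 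If $a=1$, a decreased vertex becomes $0$ and $|AN|$ decreases by one, which compensates for the lost weight.
\item For odd $n$ one boundary fibre must receive a packing vertex. We absorb this by showing that its neighbourhood still reaches $k+3$, since $3b\ge k+3-a$ leaves slack whenever the ceiling rounds up; otherwise we use a careful choice of position.
\end{itemize}
This boundary verification is the main obstacle, and we would resolve it by a case split on $k \bmod 15$ if needed.

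\textbf{Weight and closed form.} The total weight is $3(n-2)a+6b-\lceil n/2\rceil$, as claimed. For the closed form we bound $a\le (k+5)/5+1$ and $b\le (k+3-a)/3+1$, use $a\ge (k+5)/5$ inside $b$ to get $b\le (4k+10)/15+1$, and use $\lceil n/2\rceil\ge n/2$. Substituting and simplifying gives $(6kn+4k+55n-20)/10$.
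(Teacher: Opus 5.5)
Your labeling, packing reduction, weight count and closed-form estimate are the same as the paper's, and they are computed correctly. However, the step you flag as the main obstacle is a genuine gap, and the repairs you sketch cannot close it.

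Put $a=\lceil (k+5)/5\rceil$ and $b=\lceil (k+3-a)/3\rceil$. A vertex $v\in F_0$ has $f(N[v])=3b+a$ and $|AN(v)|=3$. Moreover, $3b+a=k+3$ exactly whenever $3\mid k-a$; for example $k=5$, where $a=b=2$ (also $k=2,6,9,13,17,\ldots$).

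Every maximum packing must meet $F_0\cup F_1$, and likewise $F_{n-2}\cup F_{n-1}$. This is because packing vertices lie in pairwise non-adjacent fibres, and a set of pairwise non-adjacent fibres avoiding $F_0$ and $F_1$ has at most $\lceil n/2\rceil-1$ members. In particular, your choice $F_1,F_3,\ldots$ for even $n$ contains $F_{n-1}$, and for odd $n$ both $F_0$ and $F_{n-1}$ are forced.

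A packing vertex in $F_0$ lies in $N[v]$ for all $v\in F_0$, and $(1,j)\in S$ lies in $N[(0,j)]$. So shifting it within $F_1$ only changes which boundary vertex is hit. For $k=5$ that vertex has $f(v)\le 2<k$, all labels stay positive, and $f(N[v])=7<8=k+|AN(v)|$. Hence for these $k$ the labeling is not a $[k]$-RDF for any choice of maximum packing. Your estimate $3b+a-1\ge k+2$ is exactly one unit short. A case split on $k$ cannot save this particular labeling, and restoring the condition by adding weight back at the ends overshoots the claimed bound.

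The paper's proof does not supply the missing idea either. It simply asserts $f(N[v])\ge k+|AN(v)|+1$ for every vertex, which at the boundary means $3b+a\ge k+4$, and this fails in exactly the cases above. So you have found the real weak point, but your argument, like the paper's, proves the first inequality only when $3\nmid k-a$. For the remaining $k$ a genuinely different treatment of the end fibres is needed.

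This does not refute the bound itself. For example, for $k=9$ and $n=4$, label $F_0,F_1,F_2,F_3$ by $(5,4,0)$, $(2,2,3)$, $(2,2,5)$, $(5,4,0)$, listing $f(i,0),f(i,1),f(i,2)$. This is a $[9]$-RDF of weight $34$, matching the bound, but it is not of the form \emph{uniform labels minus a packing}.

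Two smaller points. First, vertices of $F_1$ and $F_{n-2}$ have $f(N[v])=4a+b$, which can be below $5a$ (e.g.\ $k=6$: $a=3$, $b=2$), although $4a+b\ge k+5$ does hold for all $k\ge 1$. Second, $a\ge 2$ always, so your case $a=1$ never arises.
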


\begin{proof}
 The proof follows the same lines as in the previous theorem.
For each interior fibre $1\le i\le n-2$, assign weight $\left\lceil\frac{k+5}{5}\right\rceil$ to each vertex of the fibre.
On each boundary fibre (i.e., $i=0$ and $i=n-1$), distribute equal weight to each of its three vertices so that each boundary vertex receives weight
\[
\left\lceil
\frac{k+3-\left\lceil\frac{k+5}{5}\right\rceil}{3}
\right\rceil.
\]
The construction is chosen in such a way that every vertex $v$ satisfies
\[
f(N[v]) \ge k + |AN(v)| + 1,
\]
that is, each closed neighborhood has a surplus of one unit with respect to the
$[k]$-Roman domination condition.

Let $S$ be a maximum packing in $C_3\Box P_n$.
By Lemma \ref{lem:packing-C3Pn}, $|S|=\left\lceil \frac{n}{2}\right\rceil$.
Since the closed neighborhoods of vertices in $S$ are pairwise disjoint,
we may decrease the weight of each vertex in $S$ by one.
The surplus in the neighborhood sums guarantees that the $[k]$-Roman domination
condition remains satisfied after this local modification.

Consequently, the resulting labeling is still a $[k]\mathrm{RDF}$, and its total
weight is reduced by $\left\lceil \frac{n}{2}\right\rceil$.
Therefore,
\[
\gamma_{[k]R}(C_3\Box P_n)
\ \le\
3 (n-2) \left\lceil\frac{k+5}{5}\right\rceil
\ +\
6\left\lceil
\frac{k+3-\left\lceil \frac{k+5}{5}\right\rceil}{3}
\right\rceil
\ -\
\left\lceil \frac{n}{2}\right\rceil,
\]
which yields the first stated bound. Using the inequalities $\lceil x \rceil \le x+1$ and
$\lceil x \rceil \ge x$, the proof is completed.
\end{proof}

For some smaller values of $k$ the bounds obtained above can be further improved.
The following theorem summarizes this refinement.

\begin{theorem}\label{prop3:C3Pn}
Let $n\ge 4$ and $k \le 13$, $k \notin \{1,8,11,12\}$.
Set
\[
A=\left\lceil\frac{k+4}{4}\right\rceil,\qquad
B=\left\lceil\frac{-2+3A}{2}\right\rceil,\qquad
C=k+4-3A.
\]
Then
\begin{equation}\label{S}
\begin{aligned}
\gamma_{[k]R}(C_3\Box P_n)
\ \le\ &
2 (n-2)A
+4B
+2C\\
\le\ &
\frac{nk + 2k + 8n + 12}{2}.
\end{aligned}
\end{equation}
\end{theorem}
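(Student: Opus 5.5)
The plan is to exhibit an explicit labeling $f$ whose weight is exactly $2(n-2)A+4B+2C$, and then bound the ceilings as in the proofs of Theorems~\ref{C} and~\ref{prop2:C3Pn}. The shape of the bound suggests the layout. Each interior fibre carries total weight $2A$, so it has two vertices labeled $A$ and one vertex labeled $0$. Each boundary fibre carries $2B+C$, so it has two vertices labeled $B$ and one labeled $C$.

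\textbf{The labeling.} For $1\le i\le n-2$ set
\[
f(i,j)=\begin{cases}0,& j\equiv i \pmod 3,\\ A,&\text{otherwise,}\end{cases}
\]
so the $0$ rotates by one position from fibre to fibre. On $F_0$, put $C$ on the vertex whose neighbour in $F_1$ is the $0$-vertex $(1,1)$, and $B$ on the other two vertices. Treat $F_{n-1}$ symmetrically: $C$ goes above the $0$ of $F_{n-2}$, and $B$ on the rest. Then $w(f)=2(n-2)A+4B+2C$.

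\textbf{Checking the condition.} I will go through the vertex types one at a time, assuming $C\ge 1$ so that every $B$- and $C$-vertex is active.
\begin{itemize}
\item An interior $0$-vertex away from the ends has four $A$-neighbours, since the rotation puts its vertical neighbours off the zero positions. So $f(N[v])=4A\ge k+4=k+|AN(v)|$ by the choice of $A$.
\item An interior $A$-vertex sees its own value, the other $A$ of its fibre, and the $0$ of its fibre. The rotation guarantees that at most one of its two vertical neighbours is $0$. So either $f(N[v])\ge 4A\ge k+3$ with $|AN(v)|\le 3$, or $f(N[v])=5A\ge k+4$. Next to a boundary fibre, $A$ is merely replaced by $B$ or $C$, which only helps as long as these are at least $1$.
\item The $0$-vertex of $F_1$ (and of $F_{n-2}$) has neighbours $A,A,A,C$. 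Hence $f(N[v])=3A+C=k+4=k+|AN(v)|$, which is exactly why $C$ is defined as $k+4-3A$.
\item The $C$-vertex on the boundary has neighbours $B,B$ and the $0$ below it. We need $C+2B\ge k+2$, which follows from $2B\ge 3A-2$.
\item A $B$-vertex has neighbours $B$, $C$ and an $A$ below. We need $2B+C+A\ge k+3$, and indeed $2B+C+A\ge k+2+A$.
\end{itemize}
Vertices with label at least $k$ impose no condition. Finally, all labels must lie in $\{0,\dots,k+1\}$, which requires $1\le C\le k+1$ and $B\le k+1$.

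\textbf{Main obstacle.} The delicate step is the range of $k$. I would tabulate $A,B,C$ for $k\le 13$ and check which values give $C\ge1$, and where positivity of the labels is needed for the $B$-vertex inequality. I expect the failures to be exactly $k\in\{1,8,11,12\}$. For example, $k=1$ gives $A=2$, $C=-1$, and $k=8$ gives $A=3$, $C=3$ but $B=4$, so this needs checking. For $k\ge 14$ the bound is presumably no longer competitive, so it is excluded by choice rather than necessity. If some listed $k$ fails one of the inequalities above, I would adjust which boundary vertex receives $C$; for $n$ small relative to the rotation (the ends $F_1$ and $F_{n-2}$), I would verify the boundary zero positions directly.

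\textbf{The closed form.} Using $A\le \frac{k+4}{4}+\frac34$, $B\le \frac{3A-2}{2}+\frac12$, and $C=k+4-3A$, we get
\[
4B+2C\le 6A-2+2k+8-6A=2k+6.
\]
Therefore
\[
w(f)\le 2(n-2)\cdot\frac{k+7}{4}+2k+6,
\]
which simplifies to a quantity of the stated form $\frac{nk+2k+8n+12}{2}$ once the ceiling estimate for $A$ is chosen appropriately, for instance $A\le\frac{k+4}{4}+1$ combined with the sign of the $(n-2)$ term.
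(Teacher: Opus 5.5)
\textbf{The construction matches the paper's, but the check of the interior $A$-vertices is wrong, and that is the check that determines the admissible $k$.} The labeling is the same: a rotating zero on the interior fibres, $C$ above the zero of $F_1$ and of $F_{n-2}$, and $B$ elsewhere on the boundary fibres. Your checks of the interior $0$-vertices, of the zeros of $F_1$ and $F_{n-2}$, and of the boundary $B$- and $C$-vertices are correct.

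\textbf{The miscount.} The closed neighbourhood of an interior $A$-vertex $(i,j)$ consists of:
\begin{itemize}
\item the vertex itself, with label $A$;
\item the other $A$-vertex and the $0$-vertex of its own fibre;
\item two vertical neighbours. Since $j\not\equiv i \pmod 3$, $j$ is congruent to exactly one of $i-1$ and $i+1$, so \emph{exactly} one vertical neighbour is $0$.
\end{itemize}
Hence $f(N[v])=3A$ and $|AN(v)|=2$, not $4A$ or $5A$. The condition to verify is $3A\ge k+2$ whenever $A<k$. This is the paper's statement that every vertex with $f(v)=A$ has neighbourhood sum at least $k+2$.

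\textbf{Why this fixes the range of $k$.} The inequality $3A\ge k+2$:
\begin{itemize}
\item fails for $k=8$ ($9<10$), $k=11$ ($12<13$) and $k=12$ ($12<14$);
\item fails for every $k\ge 14$, since $3A\le \frac{3(k+7)}{4}<k+2$ there;
\item holds for $k\in\{3,\dots,7,9,10,13\}$, and is vacuous for $k=2$ because $A=k$.
\end{itemize}
Together with $C\ge 0$, which fails only at $k=1$, this gives exactly the hypothesis of the theorem. So the restriction $k\le 13$ is forced, not chosen because the bound stops being competitive. Your guessed explanations are also not the reason: for instance $B=4$ at $k=8$ is a legal label. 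As written, your verification would wrongly certify the labeling for $k=8,11,12$ and for all $k\ge 14$.

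\textbf{Smaller points.}
\begin{itemize}
\item You assume $C\ge 1$, but $C=0$ for $k=2$ and $k=5$, both covered by the theorem. The labeling still works there: the $C$-vertex becomes a $0$-vertex needing $2B\ge k+2$, which is your inequality $2B\ge 3A-2=k+2-C$. Your plan to discard values with $C<1$ would wrongly exclude these two cases.
\item For the $A$-vertices of $F_1$ and $F_{n-2}$, replacing an $A$-neighbour by a $B$-neighbour needs $B\ge A$, not just $B\ge 1$. This holds because $B=\lceil (3A-2)/2\rceil\ge A$ for $A\ge 2$, and $C$ is never adjacent to an $A$-vertex.
\item Your estimate does not simplify to the stated closed form; it is bounded by it. Indeed $(n-2)\frac{k+7}{2}+2k+6=\frac{nk+7n+2k-2}{2}\le\frac{nk+2k+8n+12}{2}$, since $n+14>0$.
\end{itemize}
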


\begin{proof}
On the interior fibres of $C_3\Box P_n$, we assign weights according to the periodic pattern
\[
\left(
\begin{array}{cccccc c}
\cdots & 0 & A & A & \cdots\\
\cdots & A & 0 & A & \cdots\\
\cdots & A & A & 0 & \cdots
\end{array}
\right),
\]
which ensures that every interior vertex $v$ with $f(v)=0$ satisfies
$f(N[v])\ge k+4$ and for $k \le 13$ with $k\notin\{1,8,11,12\}$, this assignment guarantees that every vertex $v$
with $f(v)=A$ has a neighborhood sum of at least $k+2$.

On each boundary fibre, let $x$ be the unique boundary vertex whose neighbor in the adjacent interior fibre has
weight $0$ under the above pattern. Assign weight $C$ to $x$, and assign weight $B$ to each of the remaining two
boundary vertices. In other words, each boundary fibre receives total additional weight $2B+C$.

With this choice, every boundary vertex $v$ satisfies $f(N[v])\ge k+3$, while the interior neighbourhood sums remain
unchanged, so $f$ is a $[k]\mathrm{RDF}$. Consequently, the total weight of $f$ is bounded by the contribution of the
interior fibres together with the boundary contributions $2B+C$ (per boundary fibre), yielding the claimed upper bound.
Using the inequality $\lceil x\rceil \le x+1$ then gives the rightmost inequality.
\end{proof}

Figure \ref{diagramC3_1} summarizes a comparison of the upper bounds obtained from
Equations~(\ref{L})--(\ref{S}).
For each fixed $k$, we indicate the range(s) of $n\ge 4$ for which a given bound
attains the smallest value among the available estimates.
Note that the bound~\ref{S} is applicable only for $k\le 13$ with
$k\notin\{1,8,11,12\}$.
The figure presents the results explicitly for $k\le 27$; for larger values of
$k$, the optimal estimates are given by either \eqref{U} or \eqref{P}, whose
dominance alternates according to the residue class of $k$ modulo~$5$.

\begin{figure}
    \centering
    \includegraphics[width=0.86\linewidth]{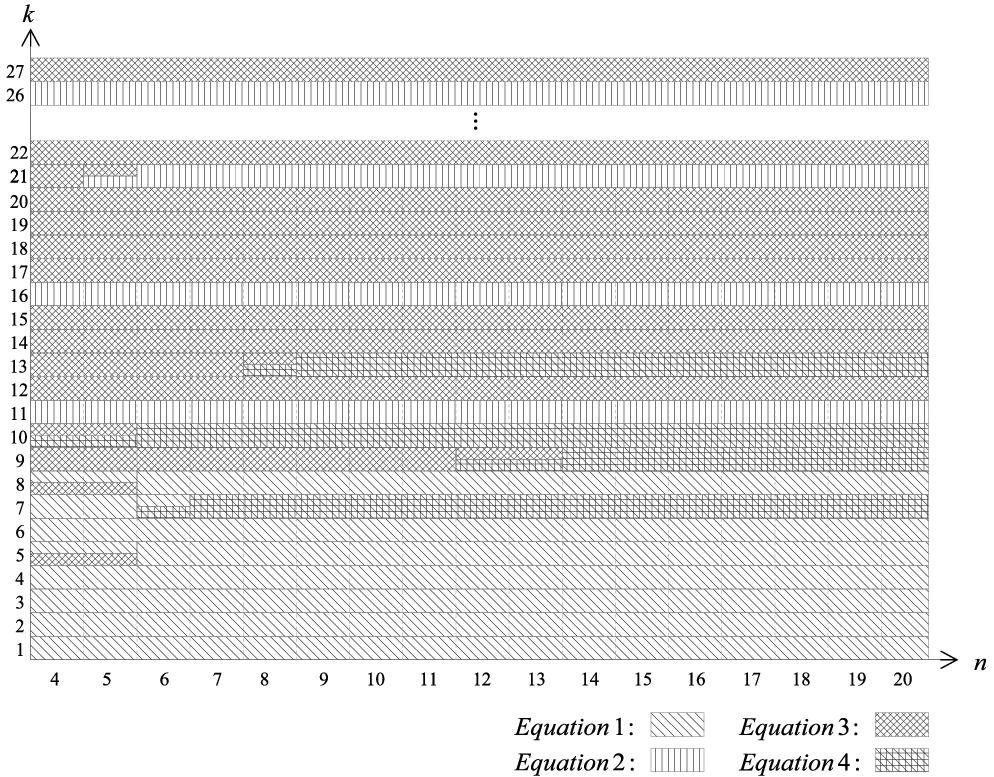}
    \caption{Best (smallest) upper bounds among \eqref{L}--\eqref{S} for $C_3\Box P_n$ for $1\le k\le 27$.}
    \label{diagramC3_1}
\end{figure}













For large $k$  the alternation between the bounds \eqref{U} and \eqref{P}
can be explained by the dependence on the ceiling terms
$\left\lceil\frac{k+5}{5}\right\rceil$ and
$\left\lceil\frac{k+4}{5}\right\rceil$.
In particular, for $k\not\equiv 1 \pmod 5$ these two values coincide, and the
packing correction $\left\lceil\frac{n}{2}\right\rceil$ makes
\eqref{P} the preferred estimate.
When $k\equiv 1 \pmod 5$, however, the term
$\left\lceil\frac{k+5}{5}\right\rceil$ is one larger than
$\left\lceil\frac{k+4}{5}\right\rceil$, and this increase dominates the packing
correction, so that \eqref{U} becomes the better bound.

\begin{table}[ht]
\centering
\renewcommand{\arraystretch}{1.2}
\begin{tabular}{c p{11.2cm}}
\hline
$k \bmod 5$ & Expected dominant bound(s) for large $n$ \\
\hline
$1$       & Equation~\eqref{U} \\
$0,2,3,4$ & Equation~\eqref{P} \\
\hline
\end{tabular}
\caption{Heuristic dominance pattern between the bounds \eqref{U} and \eqref{P}
for $C_3\Box P_n$, according to the residue class of $k$ modulo $5$.}
\label{tab:C3Pn-mod5-UP}
\end{table}

Moreover, the difference between the bounds \eqref{U} and \eqref{P} is an
explicit linear function of $n$.
Whenever the two bounds depend on different ceiling values, their leading
terms differ by a multiple of $(n-2)$, while the packing correction in
\eqref{P} contributes an additional term of order $\lceil \frac{n}{2}\rceil$.
Consequently, the gap between \eqref{U} and \eqref{P} grows proportionally
to $n$, and can therefore be made arbitrarily large by considering
sufficiently long paths.
This confirms that neither bound uniformly dominates the other and
highlights the relevance of both constructions.
\bigskip

\textbf{Upper bounds for $\gamma_{[k]R}(C_4\Box P_n)$}
\bigskip

We now present our first construction for $C_4\Box P_n$, which is based on an alternating
pattern on consecutive fibres combined with a suitable boundary treatment
and yields a linear upper bound in terms of $n$ for fixed $k$. Since the labeling pattern repeats with period $4$ along the $P_n$-direction,
the resulting upper bounds naturally split according to $n \bmod 4$.

\begin{theorem}\label{prop:C4Pn-simple}
Let $k\ge 1$ and $n\ge 1$. Then
\begin{equation} \label{A}
\gamma_{[k]R}(C_4\Box P_n)\le n(k+1). 
\end{equation}
\end{theorem}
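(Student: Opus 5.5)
The plan is to exhibit an explicit $[k]$-RDF that uses only the labels $0$ and $k+1$ and places exactly one vertex of label $k+1$ in each fibre $F_i$. Such a labeling has weight $n(k+1)$. The first step is a general observation. If $f$ takes values only in $\{0,k+1\}$ and $v$ is a $0$-vertex with $a\ge 1$ neighbours labelled $k+1$, then
\[
f(N[v])=(k+1)a\ \ge\ k+a=k+|AN(v)|.
\]
Vertices labelled $k+1\ge k$ impose no condition. Hence such an $f$ is a $[k]$-RDF precisely when the set $D=f^{-1}(k+1)$ is a dominating set. This reduces the statement to finding a dominating set of $C_4\Box P_n$ with one vertex in each fibre, in the same spirit as the first part of the proof of the proposition on efficient graphs.

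For the construction, I place label $k+1$ on $(i,0)$ for even $i$ and on $(i,2)$ for odd $i$, with all other vertices labelled $0$. In the fibre $F_i\cong C_4$, the chosen vertex $(i,j_i)$ dominates $(i,j_i-1)$, $(i,j_i)$ and $(i,j_i+1)$. The only vertex of $F_i$ it leaves undominated is the antipodal vertex $(i,j_i+2)$. For $n\ge 2$, every fibre $F_i$ has at least one neighbouring fibre $F_{i\pm1}$, and its chosen vertex is exactly $(i\pm1,j_i+2)$. That vertex is adjacent to $(i,j_i+2)$, so every vertex is dominated. This step is a short check: each fibre leaves exactly one vertex uncovered, and the alternating choice is arranged so that the adjacent fibre covers it. In particular, the boundary fibres $F_0$ and $F_{n-1}$ need no separate treatment, because each has one neighbouring fibre and one is enough. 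The period-$4$ splitting mentioned before the statement is therefore not needed for this bound. A period-$2$ pattern suffices, although any periodic variant such as $0,2,0,2,\dots$ shifted could equally be used.

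The main obstacle is the degenerate case $n=1$, where $C_4\Box P_1\cong C_4$ and there is no neighbouring fibre to cover the antipodal vertex. There I would first check directly whether weight $k+1$ is achievable on $C_4$. A single vertex labelled $k+1$ leaves its antipode with $f(N[v])=0$, so that labeling fails. I would then test other distributions of total weight $k+1$ against the lower bound of Proposition~\ref{prop:LBregular}, which gives only $\tfrac{4(k+1)}{5}$, together with a direct neighbourhood-sum argument on the antipodal pair. I expect this case to genuinely require weight larger than $k+1$: for instance, labels $k+1$ and $k$ on two antipodal vertices give weight $2k+1$. If so, the inequality should be asserted for $n\ge 2$, with $n=1$ handled separately. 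Apart from this, the proof consists of the domination check above together with the weight count $\sum_i (k+1)=n(k+1)$.
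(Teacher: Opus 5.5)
Your construction is the paper's own: label $k+1$ on $(i,0)$ for even $i$ and on $(i,2)$ for odd $i$, so that every $0$-vertex has a neighbour labelled $k+1$. Your inequality $(k+1)a\ge k+a$ spells out the step the paper only asserts, and for $n\ge 2$ your argument is complete.

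Your suspicion about $n=1$ is correct, and it points to an oversight in the paper. In $C_4\Box P_1\cong C_4$ the antipode of the single $(k+1)$-vertex is undominated. In fact $\gamma_{[k]R}(C_4)\ge k+2$ for every $k\ge 1$. Since $N[v]=V\setminus\{\bar v\}$, a weight $W\le k+1$ would force $f(\bar v)+|AN(v)|\le W-k\le 1$ at every vertex $v$ with $f(v)<k$, and a short case check on the antipodal pair rules this out. Already for $k=1$ one has $\gamma_R(C_4)=3>2$. So the statement should be restricted to $n\ge 2$.
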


\begin{proof}
Define a labeling $f:V(C_4\Box P_n)\to\{0,k+1\}$ by placing exactly one value
$k+1$ in each fibre:
\[
f(i,j)=
\begin{cases}
k+1, & \text{if $i$ is even and $j=0$,}\\
k+1, & \text{if $i$ is odd and $j=2$,}\\
0,   & \text{otherwise}.
\end{cases}
\]

One can observe $f$ is a $[k]$-RDF. Indeed, every vertex $v$ with $f(v)=0$ has at
least one neighbor labeled $k+1$,
so the $[k]$-Roman domination condition holds.

Finally, since there is exactly one vertex of weight $k+1$ in each of the $n$
fibres, we obtain
\[
w(f)=n(k+1),
\]
which proves the stated upper bound.
\end{proof}

As a direct consequence of the above construction, we obtain the following
upper bound for the $2$-Roman domination number.

\begin{corollary}\label{cor:C4Pn-2Roman}
For every integer $n\ge 3$,
\[
\gamma_{[2]R}(C_4\Box P_n)\le 3n.
\]
\end{corollary}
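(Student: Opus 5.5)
This is the case $k=2$ of Theorem~\ref{prop:C4Pn-simple}, so the plan is simply to specialize that theorem and check that the hypotheses match. Theorem~\ref{prop:C4Pn-simple} holds for all $k\ge 1$ and $n\ge 1$. Hence it holds in particular for $k=2$ and every $n\ge 3$, giving $\gamma_{[2]R}(C_4\Box P_n)\le n(2+1)=3n$.

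To make the corollary self-contained, I would recall the labeling from the proof of Theorem~\ref{prop:C4Pn-simple} with $k+1=3$. Fibre $F_i$ receives label $3$ at $(i,0)$ if $i$ is even and at $(i,2)$ if $i$ is odd; every other vertex receives $0$.

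The only point to verify is the $[2]$-Roman condition at each $0$-vertex $v$, namely $f(N[v])\ge 2+|AN(v)|$. Since all nonzero labels equal $3$, the requirement reduces to $3\,|AN(v)|\ge 2+|AN(v)|$, which holds exactly when $|AN(v)|\ge 1$. So it suffices that every $0$-vertex has a neighbor labeled $3$.

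In $C_4$, vertex $0$ dominates $\{3,0,1\}$, so within fibre $F_i$ with $i$ even the vertices $(i,1)$ and $(i,3)$ are dominated inside the fibre. The vertex $(i,2)$ is dominated by $(i\pm1,2)$, and at least one of these exists whenever $n\ge 2$. The situation for odd $i$ is symmetric, with $(i,0)$ dominated by $(i\pm1,0)$.

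The main obstacle is the degenerate case $n=1$. There the lone fibre $C_4$ leaves $(0,2)$ undominated, so the underlying construction actually needs $n\ge 2$. Under the corollary's hypothesis $n\ge 3$ this issue never arises. The weight is one label $3$ per fibre, for a total of $3n$, which proves the bound.
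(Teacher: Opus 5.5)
Your proposal is correct and follows the paper's route exactly: the corollary is the case $k=2$ of Theorem~\ref{prop:C4Pn-simple}, using the same alternating labeling with one vertex labeled $3$ in each fibre. Your explicit check that every $0$-vertex has a neighbor labeled $3$ (which needs $n\ge 2$) is accurate. So is your remark that the construction fails for $n=1$ despite the theorem's stated range, and both make your argument more careful than the paper's.
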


The bound from Theorem~\ref{prop:C4Pn-simple} is obtained by placing a single
vertex of weight $k+1$ in each fibre, which is particularly effective for small
values of $k$ and short paths. However, for longer paths one can improve the
estimate by distributing smaller weights more evenly across the interior fibres,
in the spirit of the constructions used in the previous section for $C_3\Box P_n$.
More precisely, we take a uniform base weight
$
\left\lceil\frac{k+4}{5}\right\rceil
$
on the interior vertices (following a periodic pattern), and then perform local
boundary corrections on the first and the last few fibres to meet the
$[k]$-Roman domination condition. This refinement yields a better upper bound
for sufficiently large $n$, and leads to the next theorem.

\begin{theorem}\label{prop10:C4Pn} For $k\ge 1$ and $n\ge 4$,
\begin{equation}\label{C}
\gamma_{[k]R}(C_4\Box P_n)\  \le 4 (n-2) \left\lceil\frac{k+4}{5}\right\rceil  +  8\left\lceil \frac{k+3-\left\lceil\frac{k+4}{5}\right\rceil}{3}\right\rceil<
\frac{12nk + 8k + 108n - 8}{15}.   
\end{equation}
\end{theorem}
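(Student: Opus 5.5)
The proof follows Theorem~\ref{C} for $C_3\Box P_n$ almost verbatim, with fibres of size four in place of three. Set $a=\left\lceil\frac{k+4}{5}\right\rceil$ and $b=\left\lceil\frac{k+3-a}{3}\right\rceil$. I will define $f$ by assigning weight $a$ to every vertex of each interior fibre $F_i$, $1\le i\le n-2$, and weight $b$ to every vertex of the boundary fibres $F_0$ and $F_{n-1}$. In this reading each boundary vertex receives $b$, which is consistent with the total $8b$ in the statement. The weight is then exactly $4(n-2)a+8b$, so the first inequality reduces to checking that $f$ is a $[k]$-RDF.

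\textbf{Verification.} When $k\ge 2$ we have $a\ge 1$. Every interior vertex then carries a positive label, and so do all four of its neighbours, so $|AN(v)|=4$. The closed neighbourhood of $v$ contains three vertices of its own fibre (two cycle neighbours and $v$ itself) together with one vertex in each adjacent fibre.

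For a vertex in $F_i$ with $2\le i\le n-3$, this gives $f(N[v])=5a\ge k+4$.

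For a vertex in $F_1$ or $F_{n-2}$, we get $f(N[v])=4a+b$. It suffices to have $a+b\ge 4$ and $4a\ge k$. The second holds because $4a\ge \frac{4(k+4)}{5}\ge k$ whenever $k\le 16$, and otherwise one compares with $5a\ge k+4$ more carefully; I would instead aim for the cleaner inequality $3a+b\ge k+3$ and add the remaining $a$, which covers the needed surplus $k+4$.

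For a boundary vertex $v$ we have $|AN(v)|=3$. Then $f(N[v])=3b+a\ge (k+3-a)+a=k+3$, so the condition holds.

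The case $k=1$ gives $a=1$ and $b=1$, with all labels positive. It is handled by the same inequalities, since for $k=1$ only vertices with label $0$ need to be checked, and there are none.

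\textbf{Main obstacle.} The delicate step is the fibres adjacent to the boundary, $F_1$ and $F_{n-2}$. There the neighbourhood sum is $4a+b$ rather than $5a$, and one must confirm $4a+b\ge k+4$. Since $b\ge (k+3-a)/3$, this follows from $4a+\frac{k+3-a}{3}\ge k+4$, that is, $11a\ge 2k+9$. Because $a\ge (k+4)/5$, it is enough that $11(k+4)\ge 5(2k+9)$, i.e.\ $k\ge 1$. So the inequality holds throughout the range.

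\textbf{Closed form.} For the second inequality I will use $\lceil x\rceil< x+1$ as in Theorem~\ref{C}. This gives $a<\frac{k+9}{5}$. For $b$, I use $b<\frac{k+3-a}{3}+1$ together with the lower bound $a\ge\frac{k+4}{5}$, which gives $b<\frac{4k+26}{15}$. Substituting yields
\[
4(n-2)\tfrac{k+9}{5}+8\cdot\tfrac{4k+26}{15}=\tfrac{12nk+108n-24k-216+32k+208}{15}=\tfrac{12nk+8k+108n-8}{15},
\]
which matches the stated bound. The inequality is strict because at least one of the ceilings is bounded strictly.
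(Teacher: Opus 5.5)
Your proof is correct and is essentially the paper's argument. The paper only says the construction of Theorem~\ref{prop1-C3Cn} carries over (uniform weight $\lceil\frac{k+4}{5}\rceil$ on interior vertices, weight $b$ on every boundary vertex), and you supply the verification it omits, in particular the check $4a+b\ge k+4$ on $F_1$ and $F_{n-2}$ via $11a\ge 2k+9$; the tentative intermediate remark about $a+b\ge 4$ and $4a\ge k$ is muddled and should simply be deleted in favour of that computation.
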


\begin{proof}
The proof is analogous to the one of Theorem~\ref{prop1-C3Cn}.
We use the same assignment on the interior fibres and perform
local corrections on the boundary fibres to satisfy the $[k]$-Roman
domination condition. Therefore, the claimed upper bound follows.
\end{proof}

As before, the weighting scheme based on the base value
$\left\lceil\frac{k+4}{5}\right\rceil$
produces closed neighborhood sums that exceed the required threshold.
This excess can be utilized to further reduce the total weight.
Specifically, we increase the base weight to
$\left\lceil\frac{k+5}{5}\right\rceil$
and subsequently decrease the weight by one on each vertex belonging to a maximum
packing set.

To prove the next bound, we will need the following result.

\begin{lemma}\label{lem:packing-C4Pn}
For $n\ge 1$, the packing number of $C_4\Box P_n$ is
\[
\rho\!\left(C_4\Box P_n\right)=\left\lceil \frac{2n}{3}\right\rceil .
\]
\end{lemma}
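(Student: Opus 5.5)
The plan is to prove the two inequalities $\rho(C_4\Box P_n)\ge\lceil 2n/3\rceil$ and $\rho(C_4\Box P_n)\le\lceil 2n/3\rceil$ separately. Throughout I use the fact that $S$ is a packing if and only if any two distinct vertices of $S$ are at distance at least $3$. In $C_4\Box P_n$ the distance is
\[
d\bigl((i,a),(i',b)\bigr)=|i-i'|+d_{C_4}(a,b),
\]
and $d_{C_4}(a,b)\le 2$ always.

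\emph{Construction (lower bound).} Put $(i,0)\in S$ when $i\equiv 0\pmod 3$, put $(i,2)\in S$ when $i\equiv 1\pmod 3$, and leave the fibres with $i\equiv 2\pmod 3$ empty. I check the pairwise distances in three cases.
\begin{itemize}
\item Two chosen vertices in consecutive fibres $3q,3q+1$ are at distance $1+d_{C_4}(0,2)=3$.
\item A vertex in fibre $3q+1$ and the next chosen vertex, in fibre $3q+3$, are at distance $2+d_{C_4}(2,0)=4$.
\item Any other pair lies in fibres at least $3$ apart.
\end{itemize}
So $S$ is a packing. Its size is the number of $i\in\{0,\dots,n-1\}$ with $i\not\equiv 2\pmod 3$, which equals $\lceil 2n/3\rceil$.

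\emph{Upper bound.} I first prove two local facts.
\begin{itemize}
\item Each fibre contains at most one vertex of $S$, since any two vertices of $C_4$ are at distance at most $2$.
\item Three consecutive fibres $F_i,F_{i+1},F_{i+2}$ cannot all contain a vertex of $S$. Suppose $(i,a),(i+1,b),(i+2,c)\in S$. Distance at least $3$ across adjacent fibres forces $d_{C_4}(a,b)=2$ and $d_{C_4}(b,c)=2$, so $c=a$, the vertex antipodal to $b$. But then $d\bigl((i,a),(i+2,c)\bigr)=2$, a contradiction.
\end{itemize}
Hence every block of three consecutive fibres contains at most two vertices of $S$.

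Write $n=3q+r$ with $r\in\{0,1,2\}$. Partition the fibres into $q$ blocks of three consecutive fibres plus $r$ leftover fibres. Each block contributes at most $2$ and each leftover fibre at most $1$, so $|S|\le 2q+r$. Checking the three residues, $2q+r=\lceil 2n/3\rceil$ in each case.

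The only step needing real care is the three-consecutive-fibres exclusion, where the antipodal structure of $C_4$ is used. The small cases $n=1,2$ are covered by the same counting with $q=0$.
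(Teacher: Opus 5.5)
Your proof is correct and follows the same route as the paper. Both show at most one packing vertex per $C_4$-fibre, rule out three consecutive occupied fibres, count by blocks of three for the upper bound, and give an explicit two-out-of-three periodic construction for the lower bound. Your unshifted pattern (rows $0$ and $2$, third fibre empty) works just as well as the paper's shifted one, and your use of $d\bigl((i,a),(i',b)\bigr)=|i-i'|+d_{C_4}(a,b)$ makes both the three-fibre exclusion and the leftover-fibre count more explicit than in the paper.
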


\begin{proof}
Let $F_i$ denote the $i$th $C_4$--fibre in $C_4\Box P_n$. It is clear that any packing set contains at most one vertex in each fibre $F_i$,
since any two vertices inside the same fibre have intersecting closed
neighborhoods.
Moreover, if we choose vertices of a packing in two consecutive fibres
$F_i$ and $F_{i+1}$, then no vertex can be chosen in the next fibre $F_{i+2}$. Consequently, among any three consecutive fibres, at most two can contain a
vertex of any packing set, and hence
$|S|\ge \left\lceil \frac{2n}{3}\right\rceil.$

To prove the upper bound, we exhibit a packing of size $\left\lceil \frac{2n}{3}\right\rceil$.
Consider the $4\times n$ grid representation of $C_4\Box P_n$, where each
column corresponds to a fibre and each row corresponds to a vertex of $C_4$.
Placing stars as in Figure~\ref{fig:pattern-C4Pn}, we choose two vertices in
each block of three consecutive fibres (and none in the third fibre), while
shifting the pattern by one position in $C_4$ between consecutive blocks.
It is immediate from the picture that no two chosen vertices have intersecting
closed neighborhoods, hence the starred vertices form a desired packing.

Combining both bounds, we obtain
$\rho(C_4\Box P_n)=\left\lceil \frac{2n}{3}\right\rceil.$

\begin{figure}[ht!]
\centering
\begin{tikzpicture}[scale=0.75]

\foreach \i in {1,...,6}{
  \foreach \r in {0,1,2,3}{
    \draw (\i,-\r) rectangle (\i+1,-\r-1);
  }
}

\foreach \i in {0,...,5}{
  \node at (\i+1.5,0.6) {\small $F_{\i}$};
}

\foreach \r in {0,1,2,3}{
  \node[anchor=east] at (0.9,-\r-0.5) {\small $\r$};
}

\node at (1.5,-0.5) {\Large $\ast$}; 
\node at (2.5,-2.5) {\Large $\ast$}; 

\node at (4.5,-3.5) {\Large $\ast$}; 
\node at (5.5,-1.5) {\Large $\ast$}; 

\node at (8.2,-2) {\Large $\cdots$};

\end{tikzpicture}
\caption{A periodic packing pattern in $C_4\Box P_n$ attaining
$\lceil \frac{2n}{3}\rceil$. The stars indicate the vertices selected in the packing set in each fibre.
}
\label{fig:pattern-C4Pn}
\end{figure}
\end{proof}

Since the closed neighborhoods of vertices in a packing set are pairwise disjoint,
this adjustment does not violate the $[k]$-Roman domination condition.
Using the fact that $\rho(C_4\Box P_n)=\left\lceil\frac{2n}{3}\right\rceil$,
we obtain the following result.

\begin{theorem}\label{prop11:C4Pn}
For $k\ge 1$ and $n\ge 4$,
\begin{equation}\label{B}
{
\gamma_{[k]R}(C_4\Box P_n)
}
\ \le\
4 (n-2) \left\lceil\frac{k+5}{5}\right\rceil
\ +\
8\left\lceil \frac{k+3-\left\lceil\frac{k+5}{5}\right\rceil}{3}\right\rceil
\ -\
\left\lceil \frac{2n}{3}\right\rceil
\ \le\
\frac{12nk+8k+110n-40}{15}.
\end{equation}
\end{theorem}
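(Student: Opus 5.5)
The plan is to follow the scheme of Theorem~\ref{prop2:C3Pn}: start from an everywhere-positive labeling with a uniform surplus of one in every closed neighborhood, then lower the label by one on a maximum packing. Put $a=\left\lceil\frac{k+5}{5}\right\rceil$ and $b=\left\lceil \frac{k+3-a}{3}\right\rceil$. Define $f$ on $C_4\Box P_n$ by $f(i,j)=a$ for every vertex of an interior fibre $F_i$, $1\le i\le n-2$, and $f(i,j)=b$ for every vertex of the boundary fibres $F_0$ and $F_{n-1}$. The weight of $f$ is $4(n-2)a+8b$. The first task is to check that $f$ has surplus one. Every vertex has all its neighbours active; an interior vertex has four neighbours and a boundary vertex has three. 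So we need:
\begin{itemize}
\item $f(N[v])\ge k+5$ for interior vertices whose neighbours are all interior. This holds since $5a\ge k+5$.
\item For interior vertices in $F_1$ or $F_{n-2}$, the sum is $4a+b\ge k+5$. Here we use $b\ge (k+3-a)/3$, so $4a+b\ge 4a+(k+3-a)/3$, and it suffices that $11a\ge 2k+12$. For large $k$ this follows from $a\ge (k+5)/5$, but small $k$ must be checked directly.
\item For boundary vertices, we need $3b+a\ge k+4$. This is exactly the defining inequality of $b$.
\end{itemize}
If $4a+b$ falls short for a few small $k$, I would increase the label of one designated vertex in the affected fibre. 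Alternatively, as in the proof of Theorem~\ref{prop10:C4Pn}, I would appeal to the analogous boundary treatment of Theorem~\ref{prop1-C3Cn}. This verification of surplus at the fibres adjacent to the boundary is the step I expect to be the main obstacle, and I would handle it by splitting into residues of $k$ modulo $5$ and $15$.

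Next, take a maximum packing $S$ of $C_4\Box P_n$; by Lemma~\ref{lem:packing-C4Pn}, $|S|=\left\lceil\frac{2n}{3}\right\rceil$. Define $f'=f-\mathbf{1}_S$. Since the closed neighborhoods of vertices in $S$ are pairwise disjoint, each closed neighborhood $N[v]$ loses at most one unit. The active neighborhoods do not grow, and they do not change at all as long as all labels stay positive. We have $a\ge 2$ for every $k\ge 1$. For $b$, we have $b\ge 1$ as long as $k+3>a$, which always holds. So if a packing vertex lies on a boundary with $b=1$, that label drops to $0$, which shrinks $|AN|$ of its neighbours and only helps. The vertex itself then needs $f'(N[v])\ge k$, which the surplus still gives. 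Hence $f'$ is a $[k]$-RDF of weight $4(n-2)a+8b-\left\lceil\frac{2n}{3}\right\rceil$.

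Finally, the closed-form bound comes from $\lceil x\rceil\le x+1$ and $-\lceil x\rceil\le -x$. This gives $a\le \frac{k+10}{5}$ and $b\le \frac{k+3-\frac{k+5}{5}}{3}+1=\frac{4k+25}{15}$. Substituting yields
\[
4(n-2)\tfrac{k+10}{5}+8\cdot\tfrac{4k+25}{15}-\tfrac{2n}{3}=\tfrac{12nk+8k+110n-40}{15},
\]
which is the right-hand side of \eqref{B}. One should double-check this arithmetic: the constant is $-24k/15-240/15+32k/15+200/15$, giving $8k/15-40/15$, as claimed.
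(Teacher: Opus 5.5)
Your construction is the paper's own: its proof just points back to Theorem~\ref{prop2:C3Pn}, with constant $a=\lceil\frac{k+5}{5}\rceil$ on interior fibres, constant $b=\lceil\frac{k+3-a}{3}\rceil$ on $F_0$ and $F_{n-1}$, then one subtracted on a maximum packing. Your closing arithmetic is correct, but the surplus step at the boundary fails.

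The definition of $b$ only gives $3b+a\ge k+3$. That is the bare $[k]$-RDF requirement for a boundary vertex with three active neighbours, not the $k+4$ you need. Equality holds exactly when $3\mid k+3-a$, e.g.\ for $k\in\{2,5,6,9,13,17,20\}$ (for $k=2$: $a=2$, $b=1$; for $k=5$: $a=b=2$). For these $k$ the boundary vertices have no slack at all.

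The check at $F_1$ that you flagged is in fact harmless. The inequality $4a+b\ge k+5$ holds for every $k$; for $k\le 4$ it is direct, since $a=2$ and $b\ge 1$.

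Your remark on the case $b=1$ is also wrong. A boundary packing vertex whose label drops to $0$ keeps its three active neighbours, so it still needs $f'(N[v])\ge k+3$, not $k$.

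A clever choice of packing cannot avoid this. By Lemma~\ref{lem:packing-C4Pn} applied to $C_4\Box P_{n-2}$, a packing inside $F_2\cup\dots\cup F_{n-1}$ has at most $\lceil 2(n-2)/3\rceil<\lceil 2n/3\rceil$ vertices. So every maximum packing has a vertex in $F_0$ or $F_1$. Then the packing vertex itself (if it is in $F_0$), or its neighbour in $F_0$ (if it is in $F_1$), ends with $f'(N[v])=3b+a-1=k+2$ while all three of its neighbours are still active.

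Concretely, take $k=2$, $n=4$ and $S=\{(0,0),(1,2),(3,3)\}$. You get $f'(N[(0,0)])=0+1+1+2=4<5=k+|AN((0,0))|$, and the same failure occurs at $(0,2)$ and $(3,3)$.

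The paper's proof, via Theorem~\ref{prop2:C3Pn}, asserts the same surplus, so appealing to it does not close the gap. The uniform repairs both cost an additive constant and no longer give the displayed bound:
\begin{itemize}
\item raising the boundary weight to $\lceil\frac{k+4-a}{3}\rceil$;
\item not subtracting at packing vertices in $F_0\cup F_1\cup F_{n-2}\cup F_{n-1}$.
\end{itemize}
Theorem~\ref{prop:C4Pn-simple} covers some small cases. However, for $k=20$ and $n\ge 5$, neither it nor Theorem~\ref{prop10:C4Pn} implies the first inequality in \eqref{B}. For these values of $k$ this route does not establish the statement, and you would need a genuinely non-uniform boundary labeling.
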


\begin{proof}
The proof follows the same construction and arguments as in the proof of
Theorem~\ref{prop2:C3Pn} with Equation~\ref{P}.
Therefore, it is omitted.
\end{proof}

As in the preceding section, a refined estimate can be derived for restricted ranges of $k$. The following theorem provides such an improvement for $k \le 13$, excluding a small set of exceptional values.

\begin{theorem}\label{prop3:C4Pn} Let $n\ge 4$ and $k \le 13$, $k \notin \{1,8,11,12\}$.
Set
\[
A=\left\lceil\frac{k+4}{4}\right\rceil,\qquad
B=\left\lceil\frac{-2+3A}{2}\right\rceil,\qquad
C=k+4-3A.
\]
Then
\begin{equation}\label{D}
\begin{aligned}
{
\gamma_{[k]R}(C_4\Box P_n)
} 
\ \le\ &
3 (n-2)A
+6B
+2C \\
\le\ &
\frac{3nk+5k+24n+32}{4}.
\end{aligned}
\end{equation}
\end{theorem}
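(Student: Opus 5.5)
The plan is to mimic the proof of Theorem~\ref{prop3:C3Pn}: give an explicit labeling $f$ of $C_4\Box P_n$ whose weight is exactly $3(n-2)A+6B+2C$, check the $[k]$-RDF condition class by class, and then get the rightmost inequality in \eqref{D} from $\lceil x\rceil\le x+1$.

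\textbf{The labeling.} On each interior fibre $F_i$, $1\le i\le n-2$, put $0$ on the vertex $(i,j)$ with $j\equiv i\pmod 4$ and weight $A$ on the other three vertices. So the $0$ moves one position along $C_4$ from one fibre to the next, and each interior fibre has weight $3A$. On a boundary fibre ($i=0$ or $i=n-1$), let $x$ be the vertex whose neighbour in the adjacent interior fibre is the $0$-vertex of that fibre. Put $C$ on $x$ and $B$ on the other three vertices, so the fibre has weight $3B+C$. The total is $3(n-2)A+2(3B+C)$, as claimed.

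\textbf{Interior checks.} For a $0$-vertex $v=(i,i)$, the two cycle neighbours $(i,i\pm1)$ carry $A$. The neighbours $(i\pm1,i)$ also carry $A$, because the zeros of $F_{i\pm1}$ sit at positions $i\pm1\ne i$. When one of these is a boundary neighbour, it carries $B$ or $C$ and must be checked separately. In the generic case $f(N[v])=4A$ and $|AN(v)|=4$, so we need $4A\ge k+4$, which is exactly $A=\lceil (k+4)/4\rceil$.

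For an $A$-vertex $v$, the only relevant case is $A<k$. Its closed neighbourhood contains $v$, its two cycle neighbours (one of which may be the $0$), and its two fibre-neighbours. By the shifting pattern, at most one of the fibre-neighbours is a $0$. So there are a few configurations, each with $f(N[v])\in\{3A,4A,5A\}$ minus a bounded number of zeros against $|AN(v)|\in\{2,3,4\}$. The worst is $f(N[v])=3A$ with $|AN(v)|=2$. I expect the numerical exceptions $k\in\{1,8,11,12\}$ to come from exactly these inequalities: for these $k$ the rounding in $A$ is too tight. I would simply tabulate $A$ for $k\le 13$ to confirm, as was evidently done for Theorem~\ref{prop3:C3Pn}.

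\textbf{Boundary checks (main obstacle).} Take $i=0$; the other end is symmetric. Here each vertex loses one neighbour, and the parameters $B=\lceil(3A-2)/2\rceil$ and $C=k+4-3A$ must be verified.
\begin{itemize}
\item The vertex $x$ has its interior neighbour equal to $0$, so $f(N[x])=C+2B$. If $C<k$, the requirement is $C+2B\ge k+2$, which follows from $2B\ge 3A-2$.
\item A $B$-vertex adjacent to $x$ has $f(N)=2B+C+A$; with $3A$-type estimates this comfortably exceeds $k+3$.
\item The $B$-vertex opposite $x$ has $f(N)=3B+A\ge k+3$.
\end{itemize}
The first interior fibre must then be rechecked with $B,C$ replacing one $A$. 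Its $0$-vertex is adjacent to $x$, so it gets $3A+C=k+4$, which is exactly why $C$ is defined this way. Its $A$-vertices gain a neighbour of weight $B\ge A-1$. These boundary inequalities, together with the possibility that $C\le 0$ (for which I would replace $C$ by $\max(C,0)$, leaving the bound unaffected), are the step I expect to need the most care.

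\textbf{Final estimate.} Apply $A\le (k+4)/4+1$ and $B\le (3A-2)/2+1$, and note that the $A$-terms from $6B$ and $2C$ combine. This gives at most $(3nk+5k+24n+32)/4$.
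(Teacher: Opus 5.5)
Your proposal is correct and follows the paper's intended construction: one zero per interior $C_4$-fibre shifting by one position each fibre with $A$ elsewhere, $C$ on the boundary vertex next to the interior zero and $B$ on the other three boundary vertices, then $\lceil x\rceil\le x+1$. The checks go through. The binding interior constraint is $3A\ge k+2$, which fails exactly for $k\in\{8,11,12\}$; $k=1$ is excluded instead because $C=-1$ there, and $C\ge 0$ for every admissible $k$, so no $\max(C,0)$ fix is needed. One small correction: for the $A$-vertex $(1,2)$ (and its mirror image at the other end) an $A$-neighbour is replaced by $B$, and in the tight cases $3A=k+2$ this needs $B\ge A$ rather than $B\ge A-1$. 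That stronger inequality does hold, since $\lceil (3A-2)/2\rceil\ge A$ for $A\ge 1$.
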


\begin{proof}
The proof is analogous to that of Theorem~\ref{prop3:C3Pn}.
The same periodic assignment on the interior fibres is used, with the boundary
adjustment modified accordingly, which yields the weight
$3(n-2)A+6B+2C$.
The final inequality follows by applying $\lceil x\rceil\le x+1$ to eliminate the
ceiling functions.
For brevity, we omit the details.
\end{proof}

Figure~\ref{diagramC4_1} presents a comparison of the upper bounds
given in Theorems~\ref{prop:C4Pn-simple}--\ref{prop3:C4Pn}
for the graph $C_4\Box P_n$, for $1\le k\le 20$ and all $n\ge 4$.
For small values of $k$, the simple linear bound~\eqref{A} provides the best estimate.
As $k$ increases, more refined constructions yield improved upper bounds.
In particular, for $k=15$ and $n \leq 10$ the bound \eqref{A} is the lowest. For $k=15$ and $n=11,12$ the bounds~\eqref{A} and \eqref{B} gives equal value, and for $k=15$ and $n \geq 13$ the bound~\eqref{B} outperforms other two bounds. For $k=16$ the bound~\eqref{C} eventually outperforms the linear bound~\eqref{A}.
However, there is no single bound that dominates for all sufficiently large values
of $k$; instead, the bounds~\eqref{C} and~\eqref{B} alternate in dominance as $k$
varies.
Although the bound~\eqref{D} is valid for certain values of $k$, it does not attain
the smallest value for any possible $k$.







\begin{figure}
    \centering
    \includegraphics[width=0.86\linewidth]{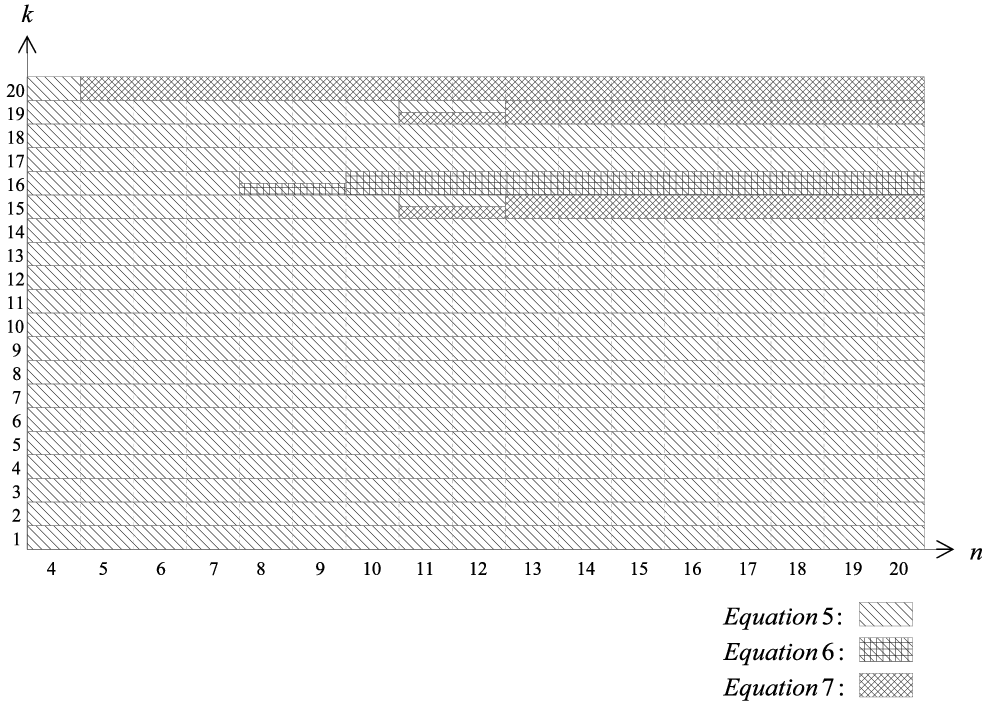}
    \caption{Best (smallest) upper bounds among \eqref{A}--\eqref{D} for $C_4\Box P_n$
for $1\le k\le 20$.}
    \label{diagramC4_1}
\end{figure}

Up to $k=52$, the bound \eqref{A} remains optimal for at least one of the small
values $n=4$ or $n=5$; from $k\ge 53$ onward, however, the optimal upper bounds
are given exclusively by \eqref{C} and \eqref{B}, see Figure \ref{diagramC4_2}. For large $n$ the alternation between the bounds \eqref{C} and \eqref{B} is explained by the
dependence on the ceiling terms $\left\lceil\frac{k+5}{5}\right\rceil$ and
$\left\lceil\frac{k+4}{5}\right\rceil$.
In particular, for $k\not\equiv 1 \pmod 5$ these two values coincide, and the
packing correction $\left\lceil\frac{2n}{3}\right\rceil$ makes \eqref{B} the
preferred estimate on sufficiently long paths.
When $k\equiv 1 \pmod 5$, however, the term $\left\lceil\frac{k+5}{5}\right\rceil$
is one larger than $\left\lceil\frac{k+4}{5}\right\rceil$, and this increase may
outweigh the packing correction, so that \eqref{C} becomes the better bound. see Table \ref{tab:C4Pn-mod5}.

\begin{figure}
    \centering
    \includegraphics[width=\linewidth]{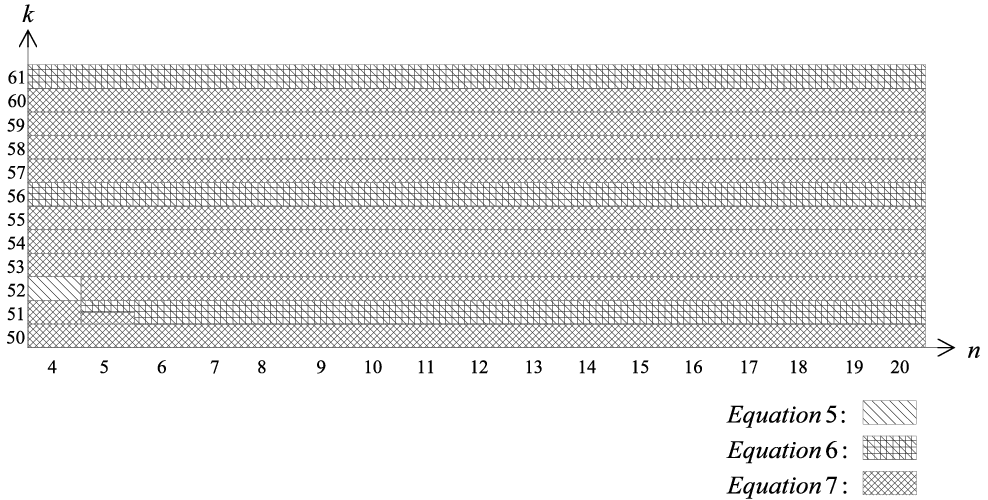}
    \caption{Best (smallest) upper bounds among \eqref{A}--\eqref{D} for $C_4\Box P_n$
for $50\le k\le 61$.}
    \label{diagramC4_2}
\end{figure}

\begin{table}[ht]
\centering
\renewcommand{\arraystretch}{1.2}
\begin{tabular}{c p{11.2cm}}
\hline
$k \bmod 5$ & Expected dominant bound(s) \\
\hline
$1$       & Equation~\eqref{C} \\
$0,2,3,4$ & Equation~\eqref{B} \\

\hline
\end{tabular}
\caption{Heuristic dominance pattern of the bounds \eqref{C} and \eqref{B}
according to the residue class of $k$ modulo $5$.}
\label{tab:C4Pn-mod5}
\end{table}

As before, the difference between both bounds is a
linear function of $n$. Hence, the gap between \eqref{C} and \eqref{B} can be made arbitrarily large by considering sufficiently long paths.}
\bigskip

\textbf{\Large{Conclusion}}
\bigskip

{In this paper, we investigated the $[k]$-Roman domination number of cylindrical
graphs of the form $C_m\Box P_n$, with a primary focus on the cases $m=3$ and $m=4$.
For these graphs, we developed several explicit labeling constructions that yield
new and improved upper bounds, and we identified the ranges of parameters for which
each construction is optimal.
In the case $C_3\Box P_n$, we provided a detailed comparison of multiple bounds,
highlighting the influence of both the parameter $k$ and the length of the path
on the quality of the resulting estimates.
For $C_4\Box P_n$, we obtained simple linear bounds and showed how they can be refined
using techniques analogous to those employed in the case $m=3$.

In addition to upper bounds, we established a general lower bound for the
$[k]$-Roman domination number of regular graphs.
This bound depends only on the order and degree of the graph and therefore applies
naturally to Cartesian products such as cylindrical and toroidal graphs.
In future work, this lower bound could be combined with the constructions presented
in this paper to yield exact values, for example in the investigation of the case $m=5$.
Based on the observed patterns, it seems plausible that an optimal bound can be
determined for $C_5\Box P_n$ and subsequently extended to graphs of the form
$C_{5\ell}\Box P_n$ via periodic constructions.
Such results could serve as a foundation for a more systematic understanding of
$[k]$-Roman domination on $C_m\Box P_n$ for general values of $m$.

Another important direction concerns the development of improved lower bounds.
Although the general lower bound for regular graphs provides a useful baseline,
there remains a gap between upper and lower bounds in several cases.
Bridging this gap, either by refining existing arguments or by introducing new
techniques tailored to specific graph families, represents an intriguing open
problem.
Finally, many of the constructions considered here are expected to extend, with
only minor local adjustments, to toroidal graphs of the form $C_m\Box C_n$.
A systematic study of $[k]$-Roman domination on such graphs, as well as on general families of graph bundles, may lead to further exact results and a deeper understanding of domination-type parameters.
}
\bigskip

\textbf{\Large{Funding}}
\bigskip

The first author (S.B.) acknowledges the financial support from the Slovenian Research Agency (ARIS)
through research programme No.\ P1-0297.
The second author (J.Z.) was partially supported by ARIS through the annual work program of Rudolfovo
and by the research grants P2-0248, L1-60136.
Also supported in part by Horizon Europe project Quantum Excellence Centre for Quantum-Enhanced Applications, QEC4QEA.
\bigskip

\bibliographystyle{cas-model2-names}
\bibliography{sn-bibliography}

\end{document}